\numberwithin{equation}{section}
\newtheorem{theorem}[equation]{Theorem}
\newtheorem{proposition}[equation]{Proposition}
\newtheorem{lemma}[equation]{Lemma}
\newtheorem{corollary}[equation]{Corollary}
\theoremstyle{definition}
\newtheorem{definition}[equation]{Definition}
\newtheorem{remark}[equation]{Remark}
\newtheorem{example}[equation]{Example}
\DeclareMathOperator{\sym}{ \sigma\!\!\!\sigma}
\def\C{\mathbb C}
\def\N{\mathbb N}
\def\R{\mathbb R}
\def\Dom{\mathcal D}
\def\K{\mathcal K}
\def\L{\mathscr L}
\def\M{\mathcal M}
\def\Oh{\mathcal O}
\def\S{\mathscr S}
\def\X{\mathcal X}
\def\Y{\mathcal Y}
\def\Z{\mathcal Z}
\def\Hor{\mathrm{Hor}}
\def\Vert{\mathrm{Vert}}
\def\bT{\,{}^{b}\hspace{-0.5pt}T}
\def\eT{\,{}^{e}\hspace{-0.5pt}T}
\def\eC{{}^eC}
\def\wg{\hspace{0.5pt}{}^{w}\hspace{-0.5pt}g}
\def\wC{\,{}^w{\!}C}
\def\wev{\,{}^w\hspace{-0.5pt}{\ev}}
\def\wT{\,{}^w\hspace{-0.5pt}T}
\def\wpi{\,{}^w\!\pi}
\def\wsym{\,{}^w\!\!\sym}
\def\embed{\hookrightarrow}
\def\eps{\varepsilon}
\def\im{i}
\def\m{\mathfrak m}
\def\open#1{\smash[t]{\overset{{}_{\,\,\circ}}{#1}{}}}
\def\set#1{\{#1\}}
\def\st{:}
\def\minus{\backslash}
\def\neutral#1{}  
\def\display#1{\mbox{\parbox{110mm} {#1}}}
\DeclareMathOperator{\ev}{ev}
\DeclareMathOperator{\Exp}{Exp}
\DeclareMathOperator{\tr}{tr}
\DeclareMathOperator{\Tr}{Tr}
\DeclareMathOperator{\Diff}{Diff}
\DeclareMathOperator{\spec}{spec}
\DeclareMathOperator{\Hom}{Hom}
\DeclareMathOperator{\opm}{op_M}
\DeclareMathOperator{\Op}{Op}
\DeclareMathOperator{\ord}{ord}
\begin{document}
\title{On the closure of elliptic wedge operators}
\thanks{Work partially supported by the National Science Foundation, Grants DMS-0901173 and DMS-0901202}

\author{Juan B. Gil}
\address{Penn State Altoona\\ 3000 Ivyside Park \\ Altoona, PA 16601-3760}
\email{jgil@psu.edu}
\author{Thomas Krainer}
\address{Penn State Altoona\\ 3000 Ivyside Park \\ Altoona, PA 16601-3760}
\email{krainer@psu.edu}
\author{Gerardo A. Mendoza}
\address{Department of Mathematics\\ Temple University\\ Philadelphia, PA 19122}
\email{gmendoza@temple.edu}

\begin{abstract}
We prove a semi-Fredholm theorem for the minimal extension of elliptic operators on manifolds with wedge singularities and give, under suitable assumptions, a full asymptotic expansion of the trace of the resolvent.
\end{abstract}

\subjclass[2010]{Primary: 58J50; Secondary:  35P05, 58J32, 58J05}
\keywords{Manifolds with edge singularities, elliptic operators, resolvents, trace asymptotics}

\maketitle

\section{Introduction}

Differential wedge operators arise, for example, when writing a regular differential operator in cylindrical coordinates in a tubular neighborhood of an embedded closed submanifold $\Y$ in some ambient closed manifold $\X$. Differential-geometrically, this blow-up procedure gives rise to a compact manifold $\M$ with boundary, whose boundary $\partial\M$ is the total space of a sphere bundle over $\Y$. The blow-down map $\M \to \X$ takes $\partial\M$ to $\Y$ and collapses each fiber to a point. The pull-back of any Riemannian metric on $\X$ to $\M$ is an example of an incomplete edge metric on $\M$.

Generalizing this, manifolds with wedge singularities are represented by compact manifolds $\M$ with boundary, where the boundary is the total space of a locally trivial fiber bundle $\wp : \partial\M \to \Y$ with typical fiber $\Z$:
\begin{equation}\label{BdyFiberBundle}
\display{\begin{center}
\begin{picture}(120,52)
\put(45,42){$\Z\embed\partial\M$	
\put(-13,-20){$\bigg\downarrow$}
\put(-5,-18){\scriptsize $\wp$}
\put(-14,-40){$\Y$}
}
\end{picture}
\end{center}}
\end{equation}
The manifolds $\Z$ and $\Y$ are assumed to be closed. In addition, $\Y$ is assumed to be connected for notational simplicity. The actual singular space (whose singularity is the edge, $\Y$) is $\M/{\sim}$, where $\sim$ collapses the fibers to points. Wedge differential operators represent a particular class of operators on such compact manifolds with fibered boundaries.

In the present paper we study the closure $A_{\min}$ of an elliptic differential wedge operator
\begin{equation*}
A : C_c^{\infty}(\open\M;E) \subset x^{-\gamma}L^2_b(\M;E) \to x^{-\gamma}L^2_b(\M;F).
\end{equation*}
Here $x$ is a defining function for the boundary, and $\gamma \in \R$ is a fixed weight. 

Under natural ellipticity assumptions, we prove that $A_{\min}$ is semi-Fredholm with finite-dimensional kernel and closed range, and we identify the minimal domain as the space $x^{-\gamma+m}H^m_{e}(\M;E)$; see \cite{Maz91}. Under only mild natural assumptions, we construct a left inverse $B(\lambda) : x^{-\gamma}L^2_b(\M;E) \to \Dom_{\min}(A)$ of $A_{\min} - \lambda$ for large $|\lambda|$ in a given sector $\Lambda \subset \C$. We then show that the trace of $B(\lambda)$ admits a full asymptotic expansion as $|\lambda| \to \infty$, $\lambda \in \Lambda$. 

Under strong additional conditions we show the existence of the resolvent family $\big(A_{\min}-\lambda\big)^{-1}$ on $\Lambda$, show that $\Lambda$ is a sector of minimal growth, and prove that for $\ell > \dim\M/{\ord}(A)$ and arbitrary
$\varphi \in C^{\infty}(\M;\Hom(E))$,
\begin{equation}\label{IntroResolventTrace}
\Tr\Bigl(\varphi\bigl(A_{\min}-\lambda\bigr)^{-\ell}\Bigr) \sim \sum_{j=0}^{\infty}\sum_{k=0}^{m_j}
\alpha_{jk}\lambda^{\frac{\dim\M-j}{\ord(A)}-\ell}\log^k(\lambda) \text{ as } |\lambda| \to \infty.
\end{equation}
Here $m_j \leq 1$ for all $j$ and $m_j = 0$ for $j \leq \dim \Z$.

By standard methods, the expansion \eqref{IntroResolventTrace} leads to results about the short time asymptotics of the heat trace when $A_{\min}$ is sectorial, and to results about the meromorphic structure of the $\zeta$-function when $A_{\min}$ is positive.

While there is a fair amount of literature regarding expansions of the form \eqref{IntroResolventTrace} in the cases of boundary value problems ($\dim \Z=0$) and of elliptic cone operators ($\dim\Y=0$), little is known for general wedge operators. For the special case of the Friedrichs extension of a semibounded second order wedge operator, a major contribution is due to Br\"uning and Seeley \cite{BruSee91}. Mazzeo and Vertman have an alternative proof\footnote{Work in progress, personal communication.} of Br\"uning's and Seeley's results using methods based on Mazzeo's edge calculus; see \cite{Maz91}.  

The present paper is a first step towards a systematic study of realizations of general elliptic wedge operators and their spectral theory. In work in progress, our goal is to study both well-posedness and trace asymptotics of elliptic wedge problems. For the case where $\dim\Y=0$, the most general result concerning trace asymptotics is proved in \cite{GKM5}.

\medskip
The structure of the paper is as follows. In Section~\ref{Sec-WedgeOps} we review the class of differential wedge operators and their symbols. Section~\ref{Spaces} collects a number of definitions and constructions that pertain to the function spaces playing a role in the analysis of wedge operators. Section~\ref{MinDomain} contains a theorem on the structure of the minimal domain and the semi-Fredholm result for the minimal extension (Theorem~\ref{DminTheorem}). Section~\ref{Sec-ResolventFamily} is devoted to the investigation of the resolvent family of the minimal extension and the asymptotic expansion of its trace (Theorem~\ref{MinLeftInverseandExpansion} and Corollary~\ref{Resolventexpansion}). The proofs of our main results in Sections~\ref{MinDomain} and \ref{Sec-ResolventFamily} rely heavily on a suitable parametrix constructed in Section~\ref{ParametrixConstruction}. Here we make substantial use of methods developed by Schulze on pseudodifferential edge operators; see e.g. \cite{SchuNH}. Alternatively, one could develop modifications of Mazzeo's approach to edge operators by incorporating parameters to accommodate the situation at hand. Finally, Section~\ref{ExpansionProof} contains the proof of the asymptotic expansion of the resolvent trace (Theorem~\ref{MinLeftInverseandExpansion}).

\section{Differential wedge operators and their symbols}\label{Sec-WedgeOps}

\subsection*{Differential wedge operators}

Let $\M$ be a $C^{\infty}$-smooth compact $d$-dimen\-sio\-nal manifold with boundary $\partial\M$, $\wp : \partial\M \to \Y$ be a smooth locally trivial fiber bundle with typical fiber $\Z$. As mentioned in the introduction, $\Y$ and $\Z$ are closed manifolds and $\Y$ is connected. We will write $q$ for the dimension of $\dim \Y$, $n$ for that of $\Z$, and $\Z_y$ for the fiber of $\partial\M$ over $y\in \Y$. Let $E, F\to\M$ be smooth vector bundles.

Recall from \cite{Maz91} that $\eT\M\to\M$ is the vector bundle whose space of sections is the space $\eC^\infty(\M;T\M)$ of smooth vector fields on $\M$ which over the boundary are tangent to the fibers of $\wp$; this is a Lie algebra. Recall also that if $E$, $F$ are vector bundles over $\M$, then $\Diff_e(\M;E,F)$, the space of edge operators, is the space of smooth linear differential operators $C^\infty(\M;E)\to C^\infty(\M;F)$ acting locally by matrices of elements of the enveloping algebra of $\eC^\infty(\M;\C T\M)$ and $C^\infty(\M)$. The subspace of operators of order $m$ is $\Diff^m_e(\M;E,F)$, or $\Diff^m_e(\M;E)$ if $F=E$, or simply $\Diff^m_e(\M)$ in the case of scalar operators.

The space of differential {\em wedge operators} of order $m$ is 
\begin{equation*}
x^{-m}\Diff^m_e(\M;E,F).
\end{equation*}
Here and throughout the paper $x : \M \to \R$ is any smooth defining function for $\partial\M$, i.e. $x \geq 0$ on $\M$ with $x = 0$ precisely on $\partial\M$, and $dx \neq 0$ on $\partial\M$. Note that $x^{-m}\Diff^m_e(\M;E,F)$ does not depend on the choice of defining function.

Let $p_0\in \partial\M$ and $\wp(p_0) = y_0$. Let $z_1,\dotsc,z_n$ be functions defined near $p_0$ in $\M$ whose restriction to $\Z_{y_0}$ give coordinates near $p_0$. Let $y_1,\dotsc,y_q$ be coordinates for $\Y$ near $p_0$. Write also $y_j$ for an extension of $\wp^*y_j$ to a neighborhood of $\Z_{y_0}$ in $\M$. The functions $x,y,z$ are coordinates for $\M$ near $p_0$; such coordinates will be called adapted. 

Locally, near a point $p_0 \in \partial\M$, a differential wedge operator $A \in x^{-m}\Diff^m_e(\M)$ can be represented in terms of adapted coordinates $x,y,z$ near $p_0$ as
\begin{equation}\label{LocalEdgeOperator1}
x^{-m}\!\!\!\sum_{k+|\alpha|+|\beta|\leq m} \!\! a_{k,\alpha,\beta}(x,y,z) (xD_x)^k(xD_y)^{\alpha}D_z^{\beta},
\end{equation}
alternatively as
\begin{equation}\label{LocalEdgeOperator2}
x^{-m}\!\!\!\sum_{k+|\alpha|+|\beta|\leq m} \!\! b_{k,\alpha,\beta}(x,y,z) x^kD_x^k(xD_y)^{\alpha}D_z^{\beta},
\end{equation}
with coefficients $a_{k,\alpha,\beta}$ or $b_{k,\alpha,\beta}$ which are smooth up to $x = 0$. Here we use standard multi-index notation and the usual convention $D = -\im \partial$ for the differential, and we write $(xD_y)^{\alpha}$ for $x^{|\alpha|}D_y^{\alpha}$. 

\subsection*{The wedge-cotangent bundle}\label{wcotangentbundle}

Let $\wC^\infty(\M;T^*\M)$ be the space of smooth one-forms on $\M$ whose pullback to each fiber of $\wp$ vanishes. Since $\wC^\infty(\M;T^*\M)$ is a locally free finitely generated $C^\infty(\M)$-module, there is a vector bundle
\begin{equation*}
\wpi:\wT^*\M\to\M
\end{equation*}
whose space of smooth sections is isomorphic to $\wC^\infty(\M;T^*\M)$ (see Swan \cite{Swan62}); this is the wedge-cotangent bundle of $\M$, the $w$-cotangent for short. Since moreover $\wC^\infty(\M;T^*\M)$ is a submodule of $C^\infty(\M;T^*\M)$, there is a vector bundle homomorphism
\begin{equation}\label{wev}
\wev^*:\wT^*\M\to T^*\M
\end{equation}
such that the induced map 
\begin{equation*}
\iota:C^\infty(\M;\wT^*\M)\to \wC^\infty(\M;T^*\M)
\end{equation*}
is an isomorphism of $C^\infty(\M)$-modules. The homomorphism $\wev^*$ is an isomorphism over $\open \M$, the interior of $\M$. 

Again, let $p_0\in \partial\M$ and pick adapted coordinates $x,y,z$ in a neighborhood $U$ of $p_0$. A smooth $1$-form $\alpha=a dx+\sum_j b_j dy_j+\sum_k c_k dz_k$ has the property that its pullback, $\sum_k c_k dz_k$, to each set $U\cap \Z_y$ vanishes if and only if the coefficients $c_j$ vanish when $x=0$. This easily gives that the differentials
\begin{equation}\label{FrameWTStar}
dx,\ dy_j,\ xdz_k, \;\; j=1,\dotsc,q, \ \;  k=1,\dotsc,n,
\end{equation}
furnish a local frame for $\wT^*\M$ near $p_0$. More properly, these differentials are the image by $\iota$ of a local frame of $\wT^*\M$.

\subsection*{The $w$-principal symbol}

Every $A \in x^{-m}\Diff^m_e(\M;E,F)$ has an invariantly defined principal symbol $\wsym(A)$; a section of the homomorphism bundle
\begin{equation*}
\Hom(\wpi^*E,\wpi^*F)\to  \wT^*\M
\end{equation*}
which is homogeneous of degree $m$ in the fibers. This is the {\em wedge symbol} of $A$.

Namely, if $A\in x^{-m}\Diff_{e}(\M;E,F)$, then its standard principal symbol (over $\open\M$),
\begin{equation*}
\sym(A)\in C^\infty(T^*\open\M\minus 0;\Hom(\pi^*E,\pi^*F)),
\end{equation*}
gives a section
\begin{equation}\label{eprincvsprincsymb}
\wsym(A) = \sym(A)\circ\wev^*
\end{equation}
of $\Hom(\wpi^*E,\wpi^*F)$ over the part of $\wT^*\M\minus 0$ over $\open\M$ which extends smoothly as a section
\begin{equation*}
\wsym(A)\in C^\infty(\wT^*\M\minus 0;\Hom(\wpi^*E,\wpi^*F)).
\end{equation*}
To see this, let $p_0\in \partial\M$, let $x,y,z$ be adapted coordinates in the neighborhood $U$ of  $p_0$, and write $A$ in the form \eqref{LocalEdgeOperator1}. If $\pmb\nu=\xi dx+ \sum \eta_j dy_j+ \sum_k\zeta_k xdz_k$ is a covector over a point $p\in \open\M\cap U$ of coordinates $x,y,z$, then 
\begin{equation*}
\sym(A)(\pmb\nu)=x^{-m}\!\!\!\sum_{\ell+|\alpha|+|\beta|=m} \! a_{\ell,\alpha,\beta} (x\xi)^\ell (x\eta)^\alpha (x\zeta)^\beta,
\end{equation*}
an expression that clearly extends smoothly down to $x=0$. Of course, using the representation \eqref{LocalEdgeOperator2} instead of \eqref{LocalEdgeOperator1} gives the same result. 

The wedge symbol can also be obtained through an oscillatory test using as phase a function $f \in C^{\infty}(\M)$ such that $f|_{\partial\M} = \wp^*g$ with $g \in C^{\infty}(\Y)$. Indeed, for such a function one has that $df \in \wC^{\infty}(\M;T^*\M)$, and hence $df=\wev^*\phi$ for a unique section $\phi$ of $\wT^*\M$. Then, if $\psi$ is a section of $E$ on $\M$, the standard formula
\begin{equation*}
\sym(A)(df)=\lim_{\varrho\to\infty} \varrho^{-m}e^{-i\varrho f(p)}Ae^{i\varrho f}\psi
\end{equation*}
over $\open\M$ gives
\begin{equation*}
\wsym(A)(\phi)=\lim_{\varrho\to\infty} \varrho^{-m}e^{-i\varrho f(p)}Ae^{i\varrho f}\psi,
\end{equation*}
also over $\open \M$. The latter extends smoothly to $\partial\M$.

\begin{definition}\label{eellipticity}
An operator $A \in x^{-m}\Diff^m_e(\M;E,F)$ is called {\em $w$-elliptic} if its $w$-principal symbol $\wsym(A)$ is invertible on $\wT^*\M\setminus 0$.
\end{definition}

In view of \eqref{eprincvsprincsymb}, a $w$-elliptic operator is in particular elliptic on $\open\M$.

\subsection*{The normal family}

Let $x : \M \to \R$ be a defining function for $\partial\M$, positive in the interior. Since $dx:T_{\partial\M}\M\to \R$ has kernel $T\partial\M$, it induces a function 
$x_{\wedge} : N \partial\M \to \R$ on the normal bundle of $\partial\M$ in $\M$. Let
\begin{equation*}
N_+ \partial\M = \set{v\in N \partial\M: x_{\wedge}v \geq 0}
\end{equation*}
be the closed inward pointing half of the normal bundle. We view it as a manifold with boundary; the latter, the zero section, is  identified with $\partial\M$. The function $x_\wedge$ is a defining function for the boundary which also gives a trivialization of $N \partial\M$ by the class of a vector field $X$ of $\M$ along $\partial\M$ such that $Xx=1$: $N_+ \partial\M \cong \partial\M \times 
\overline{\R}_+$. We will write $\partial_{x_\wedge}$ for the vertical vector field such that $\partial_{x_\wedge}x_\wedge=1$. This vector field depends on $x_\wedge$, but $x_\wedge\partial_{x_\wedge}$ does not. Later we will drop the subscript $\wedge$ from this notation.

Let
\begin{equation}\label{defWPwedge}
\pi_\wedge:N_+\partial\M\to \partial\M
\end{equation}
be the canonical projection. The map
\begin{equation}\label{ModelBoundaryFibration}
\wp_\wedge=\wp\circ \pi_\wedge:N_+\partial\M\to\Y
\end{equation}
makes $N_+\partial\M$ into the total space of a locally trivial bundle with typical fiber $\Z^{\wedge} = \overline{\R}_+ \times \Z$.

For descriptive purposes it is convenient to pick a tubular neighborhood map with some care. Pick a vector field $X$ on $\M$ such that $Xx = 1$ near $\partial\M$. It determines a trivialization of $N_+\partial\M$ and a flow on $\M$ which together give a tubular neighborhood map
\begin{equation*}
N_+\partial\M\supset V \xrightarrow{\varphi} U\subset \M,
\end{equation*}
where
\begin{equation*}
V= \set{v \in N_+\partial\M : 0 \leq x_{\wedge}v < \eps}.
\end{equation*}
Thus if $v\in V$, then $[0,1]\ni t\mapsto \varphi(tv)\in \M$ is the integral curve of $X$ starting at $p=\pi_\wedge(v)$. The map $\varphi$ enables us to pass from $N_+\partial\M$ to the manifold $\M$ and vice versa near the boundary. The fact that  $x\circ \varphi = x_\wedge$ allows us to drop the $\wedge$ from the notation $x_\wedge$ in most cases.

Let $E \to \M$ be a Hermitian vector bundle, fix a Hermitian connection and let $E^{\wedge} = \pi_\wedge^*E$ have the induced metric and connection. We get a vector bundle morphism\begin{equation*}
\Phi:E^\wedge_V\to E_U
\end{equation*}
covering $\varphi$ by sending the point $E^\wedge_v$ obtained from parallel transport of $\eta\in E^\wedge_p$ along $t\mapsto tv$ ($v\in V\subset N_+\partial\M$, $p=\pi_\wedge(v)\in \partial\M\ (=\partial N_+\partial\M)$) to the point of $E_{\Phi(v)}$ which results from parallel transport of $\eta$ along $t\mapsto \varphi(tv)$. Clearly, the morphism $\Phi$ is an isometry. We generally use the notation $E_U$ to mean the part of $E$ over $U$, likewise $E^\wedge_V$. 

If $F\to \M$ is another Hermitian vector bundle with Hermitian connection, the similarly constructed map will also be a bundle isomorphism $F^\wedge_V\to F_U$; we will denote it again by $\Phi$. 

With every operator $A \in x^{-m}\Diff^m_e(\M;E,F)$ we can associate the operator
\begin{equation*}
\Phi^* A \Phi_* : C_c^{\infty}(\open V;E^{\wedge}) \to
C^{\infty}(\open V;F^{\wedge})
\end{equation*}
where of course $\open V= V\cap \open N_+\partial\M$. (The meaning of the notation $\Phi_*$ and $\Phi^*$ should be clear; we also use this notation rather than $\varphi_*$ or $\varphi^*$ when either of the bundles is trivial.) It is easy to see that 
\begin{equation*}
\Phi^* A \Phi_* \in x^{-m}\Diff^m_e(V;E^\wedge,F^\wedge).
\end{equation*}

Let $\tau_\varrho:E^\wedge\to E^\wedge$, $\rho>0$, denote the vector bundle morphism that sends $E^\wedge_v$ to $E^\wedge_{\varrho v}$ using parallel transport along $t\mapsto tv$; $\tau_\varrho$ is really independent of the original connection on $E$. For $u \in C^{\infty}(\open N_+\partial\M;E^{\wedge})$ define $\kappa_{\varrho}u$ by
\begin{equation}\label{kappadef}
(\kappa_{\varrho}u)(v) = \varrho^{\gamma}\tau_{\varrho}^{-1}(u(\varrho v)),\quad v\in \open N_+\partial\M;
\end{equation}
the factor $\varrho^\gamma$ is for the moment arbitrary but will eventually result in $\kappa_\varrho$ extending to an isometry of a suitable $L^2$ space; see \eqref{Aunbounded}. Clearly 
\begin{equation*}
\R_+\ni \varrho\mapsto \kappa_{\varrho} : C_c^{\infty}(\open N_+\partial\M;E^{\wedge}) \to
C_c^{\infty}(\open N_+\partial\M;E^{\wedge})
\end{equation*}
is a group homomorphism, $\R_+$ with the multiplicative structure and the family of linear invertible maps on $C_c^{\infty}(\open N_+\partial\M;E^{\wedge})$ with composition. Clearly, $\kappa_\varrho$ also defines a map on sections over individual fibers of $N_+\partial\M\to\partial\M$.

If $u \in C_c^{\infty}(\open N_+\partial\M;E^{\wedge})$, then the support of $\kappa_{\varrho}u$ will be contained in $\open V$ once 
$\varrho > 0$ is sufficiently large, hence $\Phi_* \kappa_\varrho u$ will be defined as an element of $C_c^\infty(\open U;E)$. 

\begin{proposition}\label{WedgeSymbolWellDefined}
Let $A\in x^{-m}\Diff^m_e(\M;E,F)$, let $g\in C^\infty(\Y)$ be a real-valued function, and let $u\in C_c^\infty(\open N_+\partial\M,E^\wedge)$. Then the limit 
\begin{equation}\label{NormalFamilyAsLimit}
\lim_{\varrho\to \infty} \varrho^{-m}\kappa^{-1}_{\varrho}e^{-\im \varrho \wp_\wedge^*g} \Phi^* A \Phi_* e^{\im \varrho \wp_\wedge^*g} \kappa_{\varrho}u =A_\wedge (dg)u
\end{equation}
exists. As the notation indicates, the resulting operator depends only on $dg$. Furthermore, it commutes with multiplication by any function of the form $\wp_\wedge^*h$, $h\in C^\infty(\Y)$. Therefore it defines, for each $y\in \Y$ and $\pmb \eta\in T^*_y\Y$, a differential operator $A_\wedge(\pmb\eta)$ on $\Z^\wedge_y=\wp_\wedge^{-1}(y)$. This operator belongs to $x^{-m}\Diff_b^m(\Z^\wedge_y;E^\wedge_{\Z^\wedge_y},F^\wedge_{\Z^\wedge_y})$, and if $A$ is $w$-elliptic, then $A_\wedge(\pmb\eta)$ is $b$-elliptic.
\end{proposition}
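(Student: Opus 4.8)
The plan is to reduce everything to a local coordinate computation near a point $p_0 \in \partial\M$, using adapted coordinates $x,y,z$ in which $A$ is written in the form \eqref{LocalEdgeOperator1}, and then to carry out the conjugation by $\kappa_\varrho$ and $e^{\im\varrho\wp_\wedge^*g}$ explicitly. First I would note that since $\Phi^*A\Phi_* \in x^{-m}\Diff^m_e(V;E^\wedge,F^\wedge)$ and $x\circ\varphi = x_\wedge$, it suffices to work directly on $N_+\partial\M$ with an operator of the form $x^{-m}\sum a_{k,\alpha,\beta}(x,y,z)(xD_x)^k(xD_y)^\alpha D_z^\beta$, where now $x=x_\wedge$. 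The key point of the scaling \eqref{kappadef} is that in these coordinates $\kappa_\varrho$ acts (up to the parallel transport factor $\tau_\varrho$, which is lower order) by $(x,y,z)\mapsto(\varrho x, y, \varrho z)$ together with the weight factor $\varrho^\gamma$; conjugating a $b$-vector field $xD_x$ by this dilation leaves it invariant, conjugating $D_z$ scales it by $\varrho^{-1}$, and conjugating $D_y$ leaves it unchanged. Meanwhile $e^{-\im\varrho\wp_\wedge^*g}(xD_y)e^{\im\varrho\wp_\wedge^*g}$ contributes $xD_y + \varrho x\,(\partial_y g)$, and after applying $\kappa_\varrho^{-1}(\cdot)\kappa_\varrho$ the factor $x$ there becomes $\varrho^{-1}x$, so the surviving term is $x\,\partial_y g = x\,\eta_j$ with $\eta = dg$. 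Collecting the terms that survive the limit $\varrho\to\infty$ after multiplying by $\varrho^{-m}$, exactly the terms with $k+|\alpha|+|\beta| = m$ contribute, and the coefficients $a_{k,\alpha,\beta}(x,y,z)$ get frozen at $x=0$ because $a_{k,\alpha,\beta}(\varrho^{-1}\cdot x, y, \varrho^{-1}\cdot z)\to a_{k,\alpha,\beta}(0,y,z)$. This yields a concrete formula
\begin{equation*}
A_\wedge(\pmb\eta) = x^{-m}\!\!\!\sum_{k+|\alpha|+|\beta|= m}\!\! a_{k,\alpha,\beta}(0,y,z)\,(xD_x)^k (x\eta)^\alpha D_z^\beta,
\end{equation*}
with $x = x_\wedge$ the $\overline{\R}_+$-variable on $\Z^\wedge_y$, exhibiting $A_\wedge(\pmb\eta)$ manifestly as an element of $x^{-m}\Diff^m_b(\Z^\wedge_y;E^\wedge_{\Z^\wedge_y},F^\wedge_{\Z^\wedge_y})$.

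From this formula the remaining assertions follow readily. Dependence only on $dg$ is visible because only $\partial_y g$, i.e. the components of $dg$, enter; invariance of the whole construction under the choices made (connection, vector field $X$, tubular neighborhood map) should be checked by observing that changing these alters the conjugated operator by terms that vanish in the rescaled limit — the parallel transport maps $\tau_\varrho$ differ from the identity by $O(x_\wedge) = O(\varrho^{-1})$ along the relevant curves, and different tubular maps agree to first order at $\partial\M$. Commutation with multiplication by $\wp_\wedge^*h$: since $h$ depends only on $y$, $\kappa_\varrho^{-1}(\wp_\wedge^*h)\kappa_\varrho = \wp_\wedge^*h$ exactly, and the conjugated operator commutes with $\wp_\wedge^*h$ in the limit because $[A_\wedge(\pmb\eta),\wp_\wedge^*h]$ would involve $xD_y$ hitting $h$, producing a factor $x = x_\wedge$ which does not appear — more precisely, $A_\wedge(\pmb\eta)$ as written above contains no $D_y$ derivatives at all, only the multiplication operators $x\eta_j$, so it trivially commutes with any function of $y$. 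Hence $A_\wedge$ decomposes fiberwise over $\Y$, giving for each $(y,\pmb\eta)\in T^*\Y$ an operator on the single fiber $\Z^\wedge_y$. Finally, $b$-ellipticity of $A_\wedge(\pmb\eta)$ when $A$ is $w$-elliptic: the $b$-principal symbol of $A_\wedge(\pmb\eta)$ at a $b$-covector $\xi\,\tfrac{dx}{x} + \zeta\,dz$ over $\Z^\wedge_y$ is $x^{-m}\sum_{k+|\alpha|+|\beta|=m} a_{k,\alpha,\beta}(0,y,z)\xi^k(x\eta)^\alpha\zeta^\beta$, which is exactly $\wsym(A)$ evaluated at the $w$-covector $\xi\,dx + \sum\eta_j\,dy_j + \sum\zeta_k\,x\,dz_k$ over the corresponding boundary point (using the frame \eqref{FrameWTStar}); this is invertible precisely by the $w$-ellipticity hypothesis, for every $(\xi,\zeta)\neq 0$ and every value of $x\eta$.

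The main obstacle I expect is not the formal computation of the limit but the \emph{invariance/well-definedness} bookkeeping: verifying that the limit exists as a genuine operator independent of the auxiliary choices (connection on $E$ and $F$, the vector field $X$ inducing the tubular neighborhood $\varphi$ and trivialization of $N_+\partial\M$, and the adapted coordinates), and that the answer assembled from local pieces glues to a globally defined operator on the bundle $N_+\partial\M\to\Y$. The cleanest route is to extract the coordinate-free statement first — the oscillatory-test characterization of $\wsym(A)$ recalled just before the proposition already shows that the Mellin-type conjugation picks out a rescaling-homogeneous model operator — and then verify that two sets of choices give conjugate operators via a bundle isomorphism that intertwines the $\kappa_\varrho$'s, with the intertwining error terms being $O(x_\wedge)$ and hence invisible in the limit. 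Once well-definedness is secured, membership in $x^{-m}\Diff^m_b$ and $b$-ellipticity are immediate from the explicit local formula.
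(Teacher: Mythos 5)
There is a genuine error in your computation, and it stems from a misreading of the dilation $\kappa_\varrho$ defined in \eqref{kappadef}. You assert that in adapted coordinates $\kappa_\varrho$ acts by $(x,y,z)\mapsto(\varrho x,y,\varrho z)$ and that ``conjugating $D_z$ scales it by $\varrho^{-1}$.'' Neither is correct. The family $\kappa_\varrho$ scales only along the $\overline{\R}_+$-fiber of $N_+\partial\M\to\partial\M$, i.e.\ only in the normal variable $x$: the paper's proof of this very proposition computes explicitly $(\kappa_\varrho u)(x,y,z)=\varrho^\gamma u(\varrho x,y,z)$, and the same is recorded again after \eqref{GreenSymbolEstimate}. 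In particular $D_z$ commutes with $\kappa_\varrho$ exactly; it acquires no power of $\varrho$. (Your sign is also wrong: if one did scale $z\mapsto\varrho z$, $\kappa_\varrho^{-1}D_z\kappa_\varrho$ would be $\varrho D_z$, and then the limit in \eqref{NormalFamilyAsLimit} would not exist at all for terms with $|\beta|>0$.)

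Because of this mis-scaling, you conclude that only terms with $k+|\alpha|+|\beta|=m$ survive the limit, which gives an operator that is not the paper's normal family. The correct bookkeeping — precisely the identities displayed in the paper's proof — shows that $\varrho^{-m}$ is exactly cancelled by $\kappa_\varrho^{-1}x^{-m}\kappa_\varrho=\varrho^m x^{-m}$, that $xD_x$ and $D_z$ pass through unchanged, and that $xD_y$ conjugated with $e^{\im\varrho\wp_\wedge^*g}$ and $\kappa_\varrho$ contributes $x\wp_\wedge^*g_y+O(\varrho^{-1})$. Hence \emph{every} term with $k+|\alpha|+|\beta|\leq m$ survives, with coefficient frozen at $x=0$, giving \eqref{WedgeSymbolLocal} (sum over $\leq m$, not $=m$). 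Retaining the lower-order terms is essential: $A_\wedge(\pmb\eta)$ is a genuine model operator, not a principal symbol, and for instance its indicial family (used to define the conormal symbol and $\spec_e(A)$) visibly depends on the sub-principal terms. Once you replace your formula by the correct one, the remaining claims — dependence only on $dg$, commutation with $\wp_\wedge^*h$ (your observation that no $D_y$ derivative appears is right and is exactly the point), membership in $x^{-m}\Diff^m_b(\Z^\wedge_y)$, and $b$-ellipticity of $A_\wedge(\pmb\eta)$ from $w$-ellipticity — follow as you indicate, and the overall strategy (local computation in adapted coordinates, trivializing bundles by parallel frames along $X$ and $\partial_{x_\wedge}$) matches the paper's.
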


\begin{proof}
Let $y_0\in \Y$ and $p_0\in \wp^{-1}(y_0)$. We trivialize the vector bundles $E$ and $F$ in a neighborhood of $p_0$ in $\partial\M$ and extend the trivializations using frames that are parallel with respect to the integral curves of $X$, and do likewise with $E^\wedge$ and $F^\wedge$ using $\partial_{x_\wedge}$. This both reduces the problem to dealing with scalar operators and allows us to identify $U$ with $V$. Using coordinates $(x,y,z)$ on $N_+\partial\M$ where $x$ is just $x_\wedge$, the $y_j$ are lifted from $\Y$, and the $z_k$ are lifted from an extension of coordinates of $\Z_y$ near $p_0$, we have $(\kappa_\varrho u)(x,y,z)=\varrho^\gamma u(\varrho x,y,z)$, and
\begin{equation*}
\begin{gathered}
xD_x (e^{\im\varrho\wp_\wedge^*g}\kappa_\varrho u) = e^{\im\varrho\wp_\wedge^*g}\kappa_\varrho (xD_x u), \quad
D_{z_k}(e^{\im\varrho\wp_\wedge^*g}\kappa_\varrho u)=e^{\im\varrho\wp_\wedge^*g}\kappa_\varrho (D_{z_k}u),\\
 xD_{y_j}(e^{\im\varrho\wp_\wedge^*g}\kappa_\varrho u)=e^{\im\varrho\wp_\wedge^*g}\kappa_\varrho \big ( (x\wp_\wedge^* g_{y_j} + \tfrac{x}{\varrho} D_{y_j}) u\big).
\end{gathered}
\end{equation*}
Writing $A$ in the form \eqref{LocalEdgeOperator1} we thus have
\begin{align*}
\varrho^{-m}&\kappa^{-1}_{\varrho}e^{-\im \varrho \wp_\wedge^*g} \Phi^* A \Phi_* e^{\im \varrho \wp_\wedge^*g} \kappa_{\varrho}u\\
&= x^{-m} \hspace{-1em} \sum_{k+|\alpha|+|\beta|\leq m}a_{k\alpha\beta} a(x/\varrho,y,z) (xD_x)^k\big( (x\wp_\wedge^*g_y)^\alpha+\Oh(\varrho^{-1})\big) D_z^\beta u,
\end{align*}
where $\wp_\wedge^*g_y$ means the coordinate gradient of $\wp_\wedge^*g$. Consequently, the limit as $\varrho\to\infty$ exists, as claimed, and depends only pointwise on $\wp_\wedge^*dg$. Thus, if $dg$ is the covector $\pmb\eta=\sum \eta_j dy_j$ at $y$, then

\begin{equation}\label{WedgeSymbolLocal}
A_\wedge(\pmb \eta)= x^{-m} \!\!\! \sum_{k+|\alpha|+|\beta|\leq m} \!\! a_{k,\alpha,\beta}(0,y,z)(xD_x)^k (x\eta)^\alpha D_z^\beta
\end{equation}
from which the rest of the statements in the proposition follow.
\end{proof}

The only arbitrary element in \eqref{NormalFamilyAsLimit} is the map $\Phi$. We will leave it to the reader to verify that the limit is actually independent of the tubular neighborhood map. The interested reader may also verify that if $A$ is symmetric, then again $A_\wedge(\pmb \eta)$ is symmetric for each $\pmb \eta$.

\begin{definition}\label{NormalFamily}
The {\em normal family} of $A$ is the family
\begin{equation*}
T^*\Y\ni \pmb \eta \mapsto A_\wedge(\pmb\eta) \in x^{-m}\Diff_b^m(\Z^\wedge_y;E_{\Z^\wedge_y}^\wedge,F_{\Z^\wedge_y}^\wedge), \quad y=\pi_{\Y}\pmb \eta.
\end{equation*}
\end{definition}

It follows immediately from \eqref{NormalFamilyAsLimit} (also from \eqref{WedgeSymbolLocal} if one prefers working with coordinates) that the normal family is $\kappa$-homogeneous of degree $m$, i.e.,
\begin{equation}\label{WedgeSymbolKappaHom}
A_{\wedge}(\varrho\, \pmb\eta) = \varrho^m \kappa_{\varrho} A_{\wedge}(\pmb\eta)\kappa_{\varrho}^{-1}.
\end{equation}

\subsection*{The conormal symbol}

Again suppose $A \in x^{-m}\Diff^m_e(\M;E,F)$. Then $P = x^mA$ belongs to $\Diff^m_e(\M;E,F)$, and since $\Diff^m_e(\M;E,F) \subset \Diff_b^m(\M;E,F)$, the operator $P$ has an indicial family
\begin{equation*}
\widehat{P}(\sigma) : C^{\infty}(\partial\M;E_{\partial\M}) \to C^{\infty}(\partial\M;F_{\partial\M}) \; \text{ for } \sigma \in \C,
\end{equation*}
which we recall, is given as follows. If $v \in C^{\infty}(\partial\M;E)$ and $u \in C^{\infty}(\M;E)$ is any extension of $v$, then
\begin{equation*}
\widehat{P}(\sigma)v = x^{-i\sigma}Px^{i\sigma}u\big|_{\partial\M}.
\end{equation*}
This defines a holomorphic family 
\begin{equation*}
\C\ni \sigma\mapsto \widehat{P}(\sigma) \in \Diff^m(\partial\M;E_{\partial\M},F_{\partial\M}).
\end{equation*}
Likewise, if $g\in C^\infty(\Y)$ is real-valued, then 
\begin{equation*}
P_\wedge(dg)=x_\wedge^m A_\wedge(dg) \in \Diff^m_e(N_+\partial\M;E^\wedge,F^\wedge)
\end{equation*}
has an indicial family, which happens to coincide with that of $P$ using the canonical identification of $\partial\M$ with the zero section of $N_+\partial\M$. Hence the indicial family of $P_\wedge$ depends only on the base point of $T^*\Y$. And since $A_\wedge$ commutes with multiplication by any function of the form $\wp_\wedge^*h$, $h\in C^\infty(\Y)$, so does $P_\wedge$, hence also $\widehat P$. As a consequence, each of the operators $\widehat P(\sigma)$ determines yet another family of operators,
\begin{equation*}
\Y\times \C\ni (y,\sigma)\mapsto \widehat P(y,\sigma):C^\infty(\Z_y;E_{\Z_y})\to C^\infty(\Z_y;F_{\Z_y})
\end{equation*}
on each of the fibers $\Z_y=\wp^{-1}(y)$.

\begin{definition}\label{ConSymb}
The {\em conormal symbol} of $A\in x^{-m}\Diff^m_e(\M;E,F)$ is the family
\begin{equation*}
T^*\Y\times \C \ni (\pmb\eta,\sigma)\mapsto \widehat A(\pmb\eta,\sigma)
\equiv \widehat P(y,\sigma)\in \Diff^m(\Z_y;E_{\Z_y},F_{\Z_y}),\;\; y=\pi_\Y\pmb \eta.
\end{equation*}
\end{definition}

Since $\widehat A(\pmb \eta,\sigma)$ really depends only on $y=\pi_\Y\pmb \eta$ (and $\sigma$), we will often write $\widehat A(y,\sigma)$ instead. In the coordinates in which \eqref{WedgeSymbolLocal} holds we have
\begin{equation*}
\widehat A(y,\sigma)= \!\!\sum_{k+|\alpha|+|\beta|\leq m}a_{k,\alpha,\beta}(0,y,z)\sigma^k D_z^\beta.
\end{equation*}
The $e$-spectrum of $A$ is the set
\begin{equation*}
\spec_{e}(A) = \set{(y,\sigma) : \widehat{A}(y,\sigma) \text{ is not invertible}}
\end{equation*}
and the $b$-spectrum at $y\in\Y$ is
\begin{equation*}
\spec_{b,y}(A) = \set{\sigma \in \C : \widehat{A}(y,\sigma) \text{ is not invertible}}.
\end{equation*}
We will also make use of the set
\begin{equation*}
\pi_\C \spec_{e}(A) = \bigcup_{y\in\Y} \spec_{b,y}(A),
\end{equation*}
where $\pi_\C:\Y\times\C\to\C$ is the canonical projection onto the second factor. 

\smallskip
If $A\in x^{-m}\Diff^m_e(\M;E,F)$ is $w$-elliptic, then $A_\wedge(\pmb \eta)$ is $b$-elliptic for each $\pmb \eta\in T^*\Y$. Hence by the elliptic theory of $b$-operators, the operators $\widehat A(y,\sigma)$ are elliptic and we have:

\begin{lemma}\label{ConormalSymbolFredholm}
Let $A\in x^{-m}\Diff_e^m(\M;E,F)$ be $w$-elliptic and let $y\in \Y$ be fixed. The operator family
\begin{equation*}
\sigma \mapsto \widehat{A}(y,\sigma) : C^{\infty}(\Z_y;E_{\Z_y}) \to 
C^{\infty}(\Z_y;F_{\Z_y})
\end{equation*}
is a holomorphic Fredholm family with a finitely meromorphic inverse. Moreover, each set $\spec_{b,y}(A)$ is discrete, and for every $M>0$,
\begin{equation*}
\spec_{b,y}(A)\cap \set{\sigma\in \C: |\Im\sigma|\leq M}
\end{equation*}
 is finite.
\end{lemma}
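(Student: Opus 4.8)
The plan is to reduce the statement for $\widehat{A}(y,\sigma)$ to the classical analytic Fredholm theory for elliptic $b$-operators on the closed manifold $\Z_y$, using that $\widehat{A}(y,\sigma)$ is nothing but the indicial family of the elliptic $b$-operator $P=x^mA$ restricted to the fiber $\Z_y$. The key input is Lemma~\ref{ConormalSymbolFredholm}'s hypothesis combined with what precedes it: since $A$ is $w$-elliptic, Proposition~\ref{WedgeSymbolWellDefined} gives that $A_\wedge(\pmb\eta)$ is $b$-elliptic for each $\pmb\eta\in T^*\Y$, and inspecting the local formula for $\widehat{A}(y,\sigma)$ — a polynomial in $\sigma$ whose top-order part in $D_z$ is, up to the substitution recorded just before \eqref{eprincvsprincsymb}, the principal symbol of an elliptic operator on $\Z_y$ — shows that $\widehat{A}(y,\sigma)$ is an elliptic differential operator of order $m$ on the closed manifold $\Z_y$ for \emph{every} fixed $\sigma\in\C$, depending polynomially (hence holomorphically) on $\sigma$.

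First I would fix $y$ and observe that, because $\Z_y$ is closed and $\widehat{A}(y,\sigma)$ is elliptic of order $m$ for each $\sigma$, each individual operator is Fredholm of index zero as a map between the relevant Sobolev spaces $H^s(\Z_y;E_{\Z_y})\to H^{s-m}(\Z_y;F_{\Z_y})$, and the assignment $\sigma\mapsto \widehat{A}(y,\sigma)$ is a holomorphic family of such Fredholm operators. Next I would exhibit a single $\sigma_0$ at which $\widehat{A}(y,\sigma_0)$ is invertible: this follows either from the large-parameter ellipticity (take $|\Im\sigma_0|$ large, so that $\sigma_0$ acts like a large spectral parameter and the operator is invertible by standard parameter-dependent elliptic estimates for $b$-operators), or, more structurally, from invertibility of $\widehat{A}(y,\sigma)$ for $\sigma$ in a suitable half-plane coming from $b$-ellipticity of $A_\wedge(\pmb\eta)$ — this is precisely the content of the elliptic $b$-theory invoked in the sentence preceding the lemma. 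Having one invertible point, the analytic Fredholm theorem (Gohberg–Sigal, or the meromorphic Fredholm theorem) applies: $\sigma\mapsto\widehat{A}(y,\sigma)^{-1}$ is a finitely meromorphic family on all of $\C$, its poles — which constitute $\spec_{b,y}(A)$ — form a discrete set, and at each pole the principal part has finite rank (finite meromorphy).

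For the last assertion I would use the parameter-dependent ellipticity more quantitatively: writing $\sigma=\xi+i\tau$, the family $\widehat{A}(y,\xi+i\tau)$ is, for each fixed $\tau$ with $|\tau|\le M$, a classical elliptic parameter-dependent operator in $\xi\in\R$ (the parameter enters through the powers $\sigma^k$), so by the standard parameter-dependent elliptic estimate on the closed manifold $\Z_y$ it is invertible for $|\xi|$ large, uniformly for $|\tau|\le M$. Hence $\spec_{b,y}(A)\cap\{|\Im\sigma|\le M\}$ is contained in a compact set; being also discrete, it is finite.

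The main obstacle I anticipate is not any single step but the bookkeeping needed to cite the "elliptic theory of $b$-operators" at the right level of precision: one must be careful that $\widehat{A}(y,\sigma)$ is genuinely the indicial family of an elliptic $b$-operator on $\Z_y$ (so that one is entitled to its finitely meromorphic inverse), which requires tracking the reduction from $w$-ellipticity of $A$ on $\M$, through $b$-ellipticity of $A_\wedge(\pmb\eta)$ on $\Z_y^\wedge$, down to ellipticity with parameter of $\widehat{A}(y,\sigma)$ on $\Z_y$ — all of which is implicit in Proposition~\ref{WedgeSymbolWellDefined} and the local formula for $\widehat A(y,\sigma)$, but should be spelled out. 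Once that identification is secure, the holomorphy, discreteness, finite meromorphy, and the strip-finiteness are immediate consequences of analytic Fredholm theory together with parameter-dependent elliptic estimates.
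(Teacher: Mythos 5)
Your proposal is essentially correct and follows the same route the paper implicitly takes: the paper states the lemma with no proof, appealing to ``the elliptic theory of $b$-operators,'' and you are simply unpacking what that theory consists of — ellipticity of $\widehat{A}(y,\sigma)$ on the closed fiber $\Z_y$ for every fixed $\sigma$, holomorphic (polynomial) dependence on $\sigma$, the analytic Fredholm (Gohberg--Sigal) theorem once a single invertible value $\sigma_0$ is exhibited, and parameter-dependent elliptic estimates for the strip-finiteness.

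One slip should be corrected. In exhibiting the invertible value $\sigma_0$, your first alternative — ``take $|\Im\sigma_0|$ large, so that $\sigma_0$ acts like a large spectral parameter'' — fails in general: the $b$-spectrum is not confined to a horizontal strip, and $\widehat{A}(y,\sigma)$ need not be parameter-elliptic as $\Im\sigma\to\pm\infty$. Likewise ``invertibility in a suitable half-plane'' is not what $b$-ellipticity delivers. The correct parameter direction is $\Re\sigma$, exactly as in your last paragraph: restricting $\wsym(A)$ over the boundary to $\eta=0$ shows that $\sum_{\ell+|\beta|=m}a_{\ell,0,\beta}(0,y,z)\xi^\ell\zeta^\beta$ is invertible for $(\xi,\zeta)\neq 0$, which is precisely parameter-dependent ellipticity of $\sigma\mapsto\widehat{A}(y,\sigma)$ along horizontal lines, uniformly for $\Im\sigma$ in a bounded set. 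So one should take $|\Re\sigma_0|$ large with $\Im\sigma_0$ fixed; this both supplies the invertible point (hence, via constancy of the index, shows $\widehat{A}(y,\sigma)$ has index zero) and, applied uniformly on $\{|\Im\sigma|\le M\}$, yields the strip-finiteness. With this fix the argument is complete and agrees with what the paper has in mind.
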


More precisely, for each $y\in \Y$, 
\begin{equation*}
\C \ni \sigma \mapsto \widehat{A}(y,\sigma) \in \L\bigl(H^s(\Z_y;E_{\Z_y}),H^{s-m}(\Z_y;F_{\Z_y})\bigr)
\end{equation*}
is, for every $s$, a holomorphic family of elliptic differential operators with finitely meromorphic inverse. Standard elliptic regularity gives that the set $\spec_{b,y}(A)$ does not depend on the value of $s \in \R$.

While the normal family is independent of choices, the indicial family does depend on the defining function, however in a very mild way: changing the defining function to $\tilde x$ only results in a conjugation by $(\tilde{x}/x)^{i\sigma}$.

\subsection*{$w$-Laplacians} 
A $w$-metric $\wg$ is a smooth Riemannian metric on the dual $\wT\M$ of the $w$-cotangent bundle. Since \eqref{wev} is an isomorphism over $\open \M$, its dual map
\begin{equation*}
\wev:T\M\to \wT\M
\end{equation*}
gives a natural isomorphism between $T\open \M$ and the part of $\wT\M$ over $\open\M$. By way of this isomorphism, we get a standard Riemannian metric on $\open \M$ which we also denote by $\wg$. Near the boundary we can write $\wg$ as a $2$-cotensor in the forms \eqref{FrameWTStar} whose coefficients are smooth up to the boundary and form a symmetric matrix which is positive definite up to the boundary.

A $w$-Laplacian is a Laplace-Beltrami operator with respect to a $w$-metric. Such operators are elliptic wedge operators which provide an important class of examples in the theory.

Note that the Riemannian density induced by $\wg$ is of the form $x^n\m$, where $\m$ is a smooth positive density on $\M$ up to the boundary.

\section{Spaces}\label{Spaces}

Fix a $b$-density $\m_b$ on $\open\M$, i.e. a density on $\open\M$ of the form $\m_b=x^{-1}\m$ for some smooth positive density $\m$ on $\M$. The $L^2$ space of sections of the Hermitian vector bundle $E\to\M$ defined using these data is denoted $L^2_b(\M;E)$, as usual. In the context of edge operators, one of the fundamental spaces is:

\begin{definition}[Mazzeo \cite{Maz91}]\label{Hedge}
For $s \in \N_0$ let
\begin{equation*}
H^s_{e}(\M;E) = \{u \in {\mathcal D}'(\M;E) : Bu \in L^2_b(\M;E) \text{ for all } B \in \Diff_{e}^k(\M;E), \; k \leq s\}.
\end{equation*}
Define $H^s_{e}(\M;E)$ for general $s\in\R$ using interpolation and duality.
\end{definition}

The $b$-density $\m_b$ may be viewed as a density on $\bT\M$, smooth up to the boundary. Using the canonical section $x\partial_x$ of $\bT\M$ along $\partial\M$ we obtain a smooth density $\m_{\partial\M}$ on $\partial\M$ as follows: given a basis  $\mathbf v=(v_1,\dotsc,v_n)$ of $T_p\partial\M$, let $\m_{\partial\M}$ at $\mathbf v$ be the number $\m_b(v_1,\dotsc,v_n,x\partial_x)$. With this density we define $\m_{N_+}=\m_{\partial\M}\otimes|x_\wedge^{-1} dx_\wedge|$ on $N_+\partial\M$. This resulting density is independent of the choice of defining function $x$.

To obtain densities on the various $\Z_y$ and on $\Z^\wedge_y$, suppose that a smooth density has been given  on $\Y$. For example, let $\m_\Y=\wp_*\m_{\partial\M}$. Then there is a canonical density $\m_{\Z_y}$ induced  on each fiber $\Z_y$ of $\wp$, with which in turn we define $\m_{\Z^\wedge_y}=\m_{\Z_y}\otimes |x_\wedge^{-1} dx_\wedge|$ on $\Z^\wedge_y$. The latter is again independent of the choice of $x$.

Thus we have $L^2$ spaces of sections of the various versions of $E$ on all the manifolds involved in the previous two paragraphs. 

\medskip
We will also need to compare domains for $A_\wedge(\pmb\eta)$ as $\pmb \eta$ varies over $T^*\Y$. We do this by fixing the $L^2$ spaces and transferring all dependence in $y$ of measures and Hermitian metrics to the operators. This entails as a first step making
\begin{equation*}
\mathfrak p:\bigsqcup_{y\in\Y}L^2(\Z_y;E_{\Z_y})\to \Y
\end{equation*}
into a (Hilbert space) vector bundle, as described in the next paragraphs. The definition of $\mathfrak p$ is clear.

First, fix a Riemannian metric $g$ on $\partial\M$. Let $\Vert\subset T\partial\M$ be the kernel of $d\wp:T\partial\M\to T\Y$ and let $\Hor=\Vert^\perp \subset T\partial\M$ be the orthogonal subbundle. Assume $g$ has the property that
\begin{equation}\label{Uniformity}
\forall p\in \partial\M,\ \forall u,v\in \Hor_p:\quad \wp_*u=\wp_*v \implies g(u,u)=g(v,v).
\end{equation}
Thus $g$ determines a metric $g_\Y$ on $\Y$. Let $\gamma:[a,b]\to \Y$ be a continuous piecewise smooth curve. Recall that for each $p\in \Z_{\gamma(a)}$ there is a unique horizontal lift of $\gamma$, a curve
\begin{equation*}
[a,b]\ni t\mapsto \hat\gamma_t(p)\in\partial\M
\end{equation*}
with derivative in $\Hor$ such that $\wp(\hat \gamma_t(p))=\gamma(t)$ and $\hat \gamma_a(p)=p$. The maps
\begin{equation*}
\Z_{\gamma(a)}\ni p \mapsto \hat \gamma_t(p) \in \Z_{\gamma(t)} 
\end{equation*}
are diffeomorphisms for each $t$. Let $J_{\gamma(t)}\in C^\infty(\Z_{\gamma(t)})$ be the function such that $\hat\gamma_{t,*}\m_{\Z_{\gamma(a)}}=J_{\gamma(t)}\m_{\Z_{\gamma(t)}}$, a strictly positive function. Let
\begin{equation*}
\tau_{\hat \gamma_t}:E_{\Z_{\gamma(a)}}\to E_{\Z_{\gamma(t)}}
\end{equation*}
denote parallel transport in $E$ along the curve $[0,t]\ni s\mapsto \hat\gamma_s(p)$ from $p$ to $\hat\gamma_t(p)$. Thus $\tau_{\hat \gamma_t}$ is a vector bundle morphism covering $\hat\gamma_t$ preserving the Hermitian structure. If $u$ is a section of $E$ along $\Z_{\gamma(a)}$, let $\tau_{\hat \gamma_t}u$ be the section of $E$ along $Z_{\gamma(t)}$ whose value at $\hat\gamma_t(p)$ is $\tau_{\hat \gamma_t}(u(p))$. The map
\begin{equation}\label{TheIsometry}
L^2(\Z_{\gamma(a)};E_{\Z_{\gamma(a)}}) \ni u \overset{\psi_\gamma}{\longmapsto} J_{\gamma(b)}^{1/2}\tau_{\gamma(b)}u \in L^2(\Z_{\gamma(b)};E_{\Z_{\gamma(b)}})
\end{equation}
is a unitary surjective map. 

Pick some $y_0\in \Y$. Define $\Exp_{y_0}:\Hor_{\Z_{y_0}}\to \partial\M$ as follows. If $v\in \Hor_{\Z_{y_0}}$, let $\hat \gamma_t$ be the map resulting from the horizontal lift of the geodesic $t\mapsto \exp(t\wp_* v)$ of $\Y$ of the metric $g_\Y$ and define $\Exp_{y_0}(v)=\hat \gamma_1$. Given $\eps>0$, let
\begin{equation*}
B_{y_0,\eps}=\set{v\in \Hor_{\Z_{y_0}}:g(v,v)<\eps^2}.
\end{equation*}
The hypothesis \eqref{Uniformity} implies that if $\eps$ is small enough, then $\Exp:B_{y_0,\eps}\to \partial\M$ is a diffeomorphism onto its image, $U_{y_0,\eps}$, and that $V_{y_0,\eps}=\wp(U_{y_0,\eps})$ is a geodesically convex ball about $y_0$, the image by $\exp$ of the ball in  $T_{y_0}\Y$ of radius $\eps$. The map 
\begin{equation*}
[0,1]\times U_{y_0,\eps}\ni (t,p)\mapsto \hat h(t,p)=\Exp((1-t) \Exp^{-1}(p))\in U_{y_0,\eps}
\end{equation*}
is a fiber preserving (maps fibers of $\wp$ to fibers of $\wp$) retraction of $U_{y_0,\eps}$ to $\Z_{y_0}$. In fact, with
\begin{equation*}
h:[0,1]\times V_{y_0,\eps}\to V_{y_0,\eps},\quad h(t,y)=\exp((1-t)\exp^{-1}(y)),
\end{equation*}
we have that $t\mapsto \hat h(t,p)$ is the horizontal lifting of $t\mapsto h(t,\wp(p))$. For each $y\in V_{y_0,\eps}$ the map
\begin{equation*}
\Z_y \ni p \mapsto \hat h(t,p)\in \Z_{h(t,y)}
\end{equation*}
is a diffeomorphism. The construction of the previous paragraph along the curves $t\mapsto \hat h(t,p)$, $t\in [0,1]$, gives a surjective isometry
\begin{equation*}
\psi_{y_0,y}:L^2(\Z_y;E_{\Z_y})\to L^2(\Z_{y_0};E_{\Z_{y_0}})
\end{equation*}
for each $y\in V_{y_0,\eps}$ (see \eqref{TheIsometry}) such that if $u$ is a smooth section of $E$ over $U$, then $\psi_{y_0,\cdot}(u|_{\Z_{\cdot}})$ is a smooth function on $V_{y_0,\eps}\times \Z_{y_0}$.

Now fix once and for all a point $y_\star\in \Y$ and let $\Z$ be the specific fiber $\Z_{y_\star}$ with density $\m_\Z$. Given $y_0$, construct the maps $\psi_{y_0,y}$ of the previous paragraph. Pick a continuous piecewise smooth curve $\gamma$ from $y_0$ to $y_\star$, let
\begin{equation*}
\psi_\gamma:L^2(\Z_{y_0};E_{\Z_{y_0}})\to L^2(\Z;E_\Z)
\end{equation*}
be the map associated with $\gamma$ and let
\begin{equation*}
\Psi_{y_0}:\bigsqcup_{y\in V_{y_0,\eps}} L^2(\Z_y;E_{\Z_y}) \to V_{y_0,\eps}\times L^2(\Z;E_{\Z})
\end{equation*}
be the map
\begin{equation*}
\mathfrak p^{-1}(V_{y_0,\eps}) \ni u\mapsto \big(\mathfrak p(u),\psi_\gamma\circ \psi_{y_0,\wp(u)}(u)\big).
\end{equation*}
We declare the collection of maps thus obtained by varying $y_0$, $\eps$, and $\gamma$ to be the local trivializations of the Hilbert space bundle.

Structurally, the transition functions are smooth maps 
\begin{equation*}
\chi:V\to \L(L^2(\Z;E_{\Z}))
\end{equation*}
on open sets $V\subset\Y$ into bounded linear operators on $L^2(\Z;E_\Z)$ such that $\chi(y)$ is an invertible unitary map of the form
\begin{equation*}
u\mapsto f \tau u
\end{equation*}
where $\tau:E_\Z\to E_\Z$ is bundle isometry covering a diffeomorphism $\varphi:\Z\to \Z$ and $f$ is the positive function such that $\varphi_*\m_\Z=f^2\m_\Z$. In particular, the transition functions are Fourier integral operators of a very simple kind. 

Now let $P\in \Diff^m(\partial\M;E,F)$ be an operator that commutes with multiplication by all functions $\wp^*f$ with $f\in C^\infty(\Y)$. Again let $y_0\in\Y$. Construct trivializations for the bundles associated with $E$ and $F$ using the same underlying data (metric $g$, homotopies $h$ and $\hat h$, and curve $\gamma$). Get maps
\begin{equation*}
\psi_\gamma^E\circ \psi_{y_0,y}^E,\quad \psi_\gamma^F\circ \psi_{y_0,y}^F
\end{equation*}
for each $y\in V_{y_0,\eps}$. Then (with the obvious meaning for pushforward)
\begin{equation}\label{TranslatedP}
P_y=(\psi_\gamma^F\circ \psi_{y_0,y}^F)_*\circ P\circ (\psi_\gamma^E\circ \psi_{y_0,y}^E)^*
\end{equation}
is, for each $y\in V_{y_0,\eps}$, an element of $\Diff^m(\Z;E_\Z,F_\Z)$. If $P$ is symmetric as an operator on each $\Z_y$, then so is $P_y$ for each $y$ because the transition functions are unitary.

The observation of the previous paragraph is directly applicable to the case where the operator $P$ depends on a parameter such as $\sigma\in \C$: simply replace the operator $P$ in \eqref{TranslatedP} by $P(\sigma)$. This covers in particular the case of the conormal symbol (Definition~\ref{ConSymb}) of a wedge operator since $\widehat A(\pmb\eta,\sigma)$ depends on $\pmb \eta$ only through $y=\pi_\Y\pmb\eta$. The case of the normal family $A_\wedge$ (Definition~\ref{NormalFamily}), which we now discuss, is only slightly more involved.

It is convenient to introduce at this time the pullback bundle $\wp_\wedge:\pi_\Y^*\open N_+\partial\M\to T^*\Y$,
\begin{equation}\label{NormalBdyFiberBundle}
\display{\begin{center}
\begin{picture}(0,52)
\put(-35,42){
 \put(0,0){$\pi_\Y^*\open N_+\partial\M$}
 \put(62,0){$\open N_+\partial\M$}
 \put(9,-20){\scriptsize $\wp_\wedge$}
 \put(20,-20){$\bigg\downarrow$}
 \put(15,-40){$T^*\Y$}
 \put(73,-20){$\bigg\downarrow$}
 \put(80,-20){\scriptsize $\wp_\wedge$}
 \put(72,-40){$\Y$.}
 \put(45,-32){\scriptsize $\pi_\Y$}
 \put(43,-40){$\longrightarrow$}
}
\end{picture}
\end{center}}
\end{equation}
The vertical map on the right was defined in \eqref{ModelBoundaryFibration}; the reuse of the notation $\wp_\wedge$ for the map on the left should not cause confusion. The fiber over $\pmb\eta$, $\Z^\wedge_{\pmb \eta}$, is just $\Z^\wedge_y$ with $y=\pi_\Y\pmb\eta$.

Let $\Hor_+\subset T \open N_+\partial\M$ be the horizontal bundle of a connection on the principal $\R_+$-bundle $\open N_+\partial\M\to\M$ (for example one obtained from a trivialization). Thus if the curve $\hat\gamma$ in $\partial\M$ has $\tilde \gamma$ as horizontal lifting to a curve in $\open N_+\partial\M\to\M$ starting at $v$, then its horizontal lifting to a curve starting at $\varrho v$ is $\varrho\tilde \gamma$. The property that the connection on $\open N_+\partial\M$ respects the action of $\R_+$ implies that monomials $x^{|\alpha|}$ are preserved: If $\tilde \tau_{\hat\gamma}$ denotes parallel transport along a curve $\hat \gamma$ in $\partial\M$ from the fiber of $\open N^+\partial\M$ over $p_0$ to that over $p_1$ and $x^{|\alpha|}$ is a monomial on the fiber over $p_1$, then $\tilde\tau_{\hat\gamma}^*x^{|\alpha|}$ is a monomial (of the same order) on the fiber over $p_0$.

Give the manifold $\open N_+\partial\M$ a metric which on its horizontal subbundle is the lifting of $g$ (the metric on $\partial\M$) by $\pi_\wedge$ and with which its vertical and horizontal tangent bundles are orthogonal. Write $g_\wedge$ for this Riemannian metric. Give the manifold $T^*\Y$ the Riemannian metric determined by $g_\Y$ and write $g_{T^*\Y}$ for it. Thus $\pi_\Y^*\open N_+\partial\M\to T^*\Y$, as a submanifold of $T^*\Y\times \open N_+\partial\M$, has a metric determined by those of $T^*\Y$ and $\open N_+\partial\M$. This metric is related to $g_{T^*\Y}$ in the same way as $g$ is to $g_\Y$ in \eqref{Uniformity}.

Horizontal lifts of curves in $T^*\Y$ can be described as follows. Let $\pmb \eta_0\in T^*\Y$ and $v_0\in \wp_\wedge^{-1}(\pmb\eta_0)$ and let $\gamma$ be a curve in $T^*\Y$ starting at $\pmb\eta_0$ (the map $\pi_\wedge$ was defined in \eqref{defWPwedge}). Lift the curve $\pi_\Y\circ \gamma$ to a horizontal curve in $\partial\M$ starting at $\pi_\wedge v_0$, then lift this curve to a horizontal curve $\tilde \gamma$ in $\open N_+\partial\M$ starting at $v_0$. Then $t\mapsto (\gamma(t),\tilde\gamma(t))$ is the lifting of $\gamma$ starting at $(\pmb\eta_0,v_0)$. 

The construction done before is repeated to give
\begin{equation*}
\mathfrak p_\wedge:\bigsqcup_{y\in \Y,\pmb\eta\in T^*_y\Y}L^2(\Z^\wedge_y;E^\wedge_{\Z_y^\wedge})\to T^*\Y
\end{equation*}
the structure of a Hilbert space bundle with special transition functions and model fiber $L^2(\Z^\wedge;E_{\Z^\wedge})$. The role of $\Z=\Z_{y_\star}$ earlier is now played by $\Z^\wedge_{\pmb\eta_\star}$ with some fixed element $\pmb\eta_\star\in T_{y_\star}^*\Y$. 

Given an arbitrary element $\pmb\eta_0\in T^*\Y$, we transport the operators
\begin{equation*}
A_\wedge(\pmb\eta) \in x^{-m}\Diff_b^m(\Z^\wedge_{\pmb\eta};E_{\Z^\wedge_{\pmb\eta}}^\wedge,F_{\Z^\wedge_{\pmb\eta}}^\wedge)
\end{equation*}
with $\pmb\eta$ in a suitable neighborhood $V$ of $\pmb\eta_0$ using the analogue of \eqref{TranslatedP}, thus obtaining a family
\begin{equation}\label{LocalNormalFamily}
V\ni \pmb \eta \mapsto \tilde A_{\wedge,\pmb\eta} \in x^{-m}\Diff_b^m(\Z^\wedge_{\pmb \eta};E_{\Z^\wedge_{\pmb \eta_\star}}^\wedge,F_{\Z^\wedge_{\pmb \eta_\star}}^\wedge)
\end{equation}
depending smoothly on $\pmb \eta$. The fact that the construction used a connection on $\open N_+\partial\M$ that respects the action of $\R_+$ implies that the structure of $A_\wedge(\pmb\eta)$ in the fiber variable of $\open N_+\partial\M$ is preserved, meaning the local form \eqref{WedgeSymbolLocal} is preserved. Because of \eqref{WedgeSymbolKappaHom}, changing the connection on $\open N_+\partial\M$ changes the resulting local family \eqref{LocalNormalFamily} by conjugation with $\kappa_{h(\pmb\eta)}$ and multiplication by $h(\pmb\eta)^m$ for some smooth positive $h$.

\section{The minimal domain and a semi-Fredholm theorem}\label{MinDomain}

This section is concerned with the minimal extension of an elliptic differential wedge operator (of positive order) when considered as an unbounded operator
\begin{equation}\label{Aunbounded}
A: C_c^{\infty}(\open\M;E) \subset x^{-\gamma}L^2_b(\M;E) \to x^{-\gamma}L^2_b(\M;F)
\end{equation}
for some fixed $\gamma\in\R$. We let $\Dom_{\min}=\Dom_{\min}(A)$ be the closure of $C_c^{\infty}(\open\M;E)$ with respect to the norm 
\begin{equation*}
\|u\|^2_A = \|u\|^2_{x^{-\gamma}L^2_b(\M;E)} + \|Au\|^2_{x^{-\gamma}L^2_b(\M;F)},
\end{equation*}
and denote by $A_{\min}$ the operator \eqref{Aunbounded} with domain $\Dom_{\min}$.  

The problem is essentially local in $\Y$, so we will use local coordinates $y_1,\dots,y_q$, and the notation $(y,\eta)$ for an element $\pmb\eta\in T^*\Y$. We fix some $y_0$ in the domain of the chart and choose the space $x^{-\gamma}L^2_b$ over $\Z^{\wedge}=\Z_{y_0}^{\wedge}$ as reference Hilbert space for the normal family $A_{\wedge}(\pmb\eta)=A_{\wedge}(y,\eta)$. We will write $E^\wedge$ when we mean $E^\wedge_\Z$. Our starting point is thus the unbounded operator
\begin{equation}\label{NormalFamilyUnbounded}
A_{\wedge}(y,\eta) : C_c^{\infty}(\Z^{\wedge};E^\wedge) \subset
x^{-\gamma}L^2_b(\Z^{\wedge};E^\wedge) \to x^{-\gamma}L^2_b(\Z^{\wedge};F^\wedge).
\end{equation}

Let $H_b^m(\Z^\wedge;E^\wedge)$ be the space of functions $u\in L^2_b(\Z^\wedge;E^\wedge)$ such that $Pu\in L^2_b$ for each $P\in \Diff_b^m(\Z^\wedge;E^\wedge)$ with coefficients independent of $x$ (i.e., $[P,\nabla_{x\partial_{x}}]=0$). Recall that $\Z^\wedge$ is a principal $\R_+$ bundle over $\Z$. Let $H_{\rm cone}^s({\Z^\wedge};E^\wedge)$ be the space of $E^\wedge$-valued distributions $u$ on $\Z^\wedge$ such that, given any coordinate patch $U\subset \Z$ diffeomorphic to an open subset of the sphere $S^{n}$ such that $E^\wedge_U\cong U\times\C^r$,  and any $\chi \in C_c^\infty(U)$, we have that $\chi u$ is the restriction to $\R^{n+1}\minus\{0\}$ of an element of $H^s(\R^{n+1};\C^r)$; here $\R_+ \times S^{n}$ is identified with $\R^{n+1} \minus \{0\}$ via polar coordinates.

\medskip
The following proposition follows from standard results about cone operators; see for example \cite{GiMe01, Le97, SchuNH}.
\begin{proposition}\label{WedgeMinProperties}
Let $A \in x^{-m}\Diff^m_{e}(\M;E,F)$ be $w$-elliptic.
\begin{enumerate}[$(i)$]
\item For $(y,\eta) \in T^*\Y$ the closure of $C_c^{\infty}(\Z_y^{\wedge};E^{\wedge})$ with respect to the norm
\begin{equation*}
 \|u\|_{A_{\wedge}(y,\eta)}^2 = \|u\|^2_{x^{-\gamma}L^2_b} + \|A_{\wedge}(y,\eta)u\|^2_{x^{-\gamma}L^2_b}
\end{equation*}
is independent of $\eta$. In other words, the minimal domain of $A_{\wedge}(y,\eta)$ depends only on $y\in\Y$, hence we will denote it by $\Dom_{\wedge,\min}(y)$.
\item If $\spec_{b,y}(A) \cap \{\sigma\in \C \st \Im(\sigma)=\gamma-m\}= \emptyset$, then for any cut-off function $\omega$ with $\omega=1$ near $x=0$, we have
\begin{equation*}
\Dom_{\wedge,\min}(y) = 
\omega x^{-\gamma+m}H^m_b(\Z_y^{\wedge};E^{\wedge}) + (1-\omega) x^{\frac{n+1}{2}-\gamma} H^m_{\textup{cone}}(\Z_y^{\wedge};E^{\wedge}).
\end{equation*}
\item The group of isometries $\kappa_{\varrho} \in
\L(x^{-\gamma}L^2_b(\Z_y^{\wedge}; E^{\wedge}))$, $\varrho > 0$, defined in \eqref{kappadef}, restricts to a strongly continuous group on $\Dom_{\wedge,\min}(y)$.
\item For $(y,\eta) \in T^*\Y\setminus 0$, the normal family
$$
A_{\wedge}(y,\eta) : \Dom_{\wedge,\min}(y) \to x^{-\gamma}L^2_b(\Z_y^{\wedge};F^{\wedge})
$$
is Fredholm.
\end{enumerate}
\end{proposition}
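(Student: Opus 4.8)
The plan is to prove the four statements of Proposition~\ref{WedgeMinProperties} by reducing each one to known facts about $b$-elliptic cone operators on $\Z_y^\wedge$, exploiting the $\kappa$-homogeneity \eqref{WedgeSymbolKappaHom} of the normal family to handle the dependence on $\eta$. Throughout, fix $y\in\Y$; since $A_\wedge(y,\eta)\in x^{-m}\Diff_b^m(\Z_y^\wedge;E^\wedge,F^\wedge)$ is $b$-elliptic by Proposition~\ref{WedgeSymbolWellDefined}, the standard cone theory (e.g.\ \cite{GiMe01,Le97,SchuNH}) applies to each individual member of the family.

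For $(i)$ and $(ii)$: if $\eta=0$, the operator $A_\wedge(y,0)=x^{-m}\sum a_{k,0,\beta}(0,y,z)(xD_x)^kD_z^\beta$ is a $b$-elliptic cone operator whose minimal domain is computed by the standard formula in $(ii)$, valid precisely when the weight line $\{\Im\sigma=\gamma-m\}$ avoids $\spec_{b,y}(A)$, which is discrete by Lemma~\ref{ConormalSymbolFredholm}. For $\eta\neq 0$, one writes $A_\wedge(y,\eta)-A_\wedge(y,0)=\sum_{|\alpha|\geq 1}a_{k,\alpha,\beta}(0,y,z)x^{-m}(xD_x)^k(x\eta)^\alpha D_z^\beta$; each such term carries at least one factor of $x$ relative to what is needed, so it is $x^{-m+1}\Diff_b^{m-1}$, hence a relatively bounded perturbation of $A_\wedge(y,0)$ with relative bound zero on the minimal domain. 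A Kato–Rellich type argument then shows the graph norms $\|\cdot\|_{A_\wedge(y,\eta)}$ and $\|\cdot\|_{A_\wedge(y,0)}$ are equivalent on $C_c^\infty$, giving $(i)$; and since the domain is unchanged, $(ii)$ follows from the $\eta=0$ case. The main technical point here is verifying the relative bound zero, which rests on the fact that a single power of $x$ gains decay/order in a way controlled uniformly on the cone — this is where one invokes a Hardy-type inequality on $x^{-\gamma}L^2_b(\overline\R_+)$ valid away from the indicial weight.

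For $(iii)$: conjugate by $\kappa_\varrho$. From \eqref{WedgeSymbolKappaHom} one has $\kappa_\varrho^{-1}A_\wedge(y,\eta)\kappa_\varrho=\varrho^{-m}A_\wedge(y,\varrho^{-1}\eta)$, so $\kappa_\varrho$ carries the minimal domain of $A_\wedge(y,\eta)$ to that of $A_\wedge(y,\varrho^{-1}\eta)$; by $(i)$ both equal $\Dom_{\wedge,\min}(y)$, so $\kappa_\varrho$ preserves $\Dom_{\wedge,\min}(y)$. Boundedness of $\kappa_\varrho$ on $\Dom_{\wedge,\min}(y)$ with locally bounded operator norm follows from the explicit scaling of the two graph-norm summands, and strong continuity follows from strong continuity of $\kappa_\varrho$ on $x^{-\gamma}L^2_b$ together with a density argument using that $C_c^\infty(\open\Z_y^\wedge;E^\wedge)$ is a core and is mapped into itself by $\kappa_\varrho$.

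For $(iv)$: for $(y,\eta)\neq 0$ the operator $A_\wedge(y,\eta)$ is a $b$-elliptic cone operator for which, in addition, the condition that the conormal symbol be invertible on the weight line $\{\Im\sigma=\gamma-m\}$ holds — this is exactly the standard hypothesis guaranteeing that the minimal (here equal to the cone-calculus) extension is Fredholm. I would simply cite the relevant Fredholm result from \cite{GiMe01,Le97,SchuNH}, after noting that $w$-ellipticity ensures both the $b$-symbol and the $\eta$-dependent "edge-symbol at infinity" are invertible. Actually, one subtlety: Fredholmness of the minimal extension on a cone requires the weight $\gamma-m$ to not be an indicial value, which is assumed in $(ii)$ but not in $(iv)$; the point is that for $\eta\neq 0$ the presence of the invertible $\eta$-dependent part pushes the problem off the boundary and the minimal domain is automatically the "good" domain, so Fredholmness holds without a weight restriction — this is the content I would extract from the cited references. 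I expect the verification of relative bound zero in $(i)$ to be the main obstacle, since it is the place where the non-compactness of $\Z_y^\wedge$ and the behavior at $x=0$ genuinely interact; everything else is bookkeeping on top of cited cone theory.
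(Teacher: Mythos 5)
The paper does not actually prove Proposition~\ref{WedgeMinProperties}; it simply declares that the statement ``follows from standard results about cone operators'' and cites \cite{GiMe01,Le97,SchuNH}. So there is no argument in the text to compare against, and your sketch is a genuine attempt to fill a gap. Your perturbative route to $(i)$ is a reasonable alternative to what the cited references would do (a direct parametrix/Mellin-calculus identification of $\Dom_{\min}$): you verify that for $|\alpha|\geq 1$ the terms $x^{-m}(xD_x)^k(x\eta)^\alpha D_z^\beta$ lie in $x^{-m+1}\Diff_b^{m-1}$, and then invoke a Kato--Rellich type argument. That strategy can be made to work, but as you anticipate the relative-bound estimate has to be established at both ends of $\Z_y^\wedge$: near $x=0$ the extra factor of $x$ is what provides smallness (a Hardy-type estimate in the $b$-calculus), while near $x=\infty$ it is the drop in \emph{cone} order from $m$ to $m-|\alpha|$ together with a parameter-dependent Rellich argument, and the two regimes need separate treatment. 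In $(iii)$ you have a sign slip --- from \eqref{WedgeSymbolKappaHom} one gets $\kappa_\varrho^{-1}A_\wedge(y,\eta)\kappa_\varrho = \varrho^{m}A_\wedge(y,\varrho^{-1}\eta)$, not $\varrho^{-m}$ --- but this does not affect the conclusion since only the positive scalar factor changes.

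The genuine error is in your discussion of $(iv)$. You correctly notice that Fredholmness of the minimal cone extension requires $\spec_{b,y}(A)\cap\{\Im\sigma=\gamma-m\}=\emptyset$, and that the Proposition states $(iv)$ without this hypothesis, but your proposed resolution --- that ``the presence of the invertible $\eta$-dependent part pushes the problem off the boundary'' so no weight restriction is needed --- is false. The conormal symbol of $A_\wedge(y,\eta)$ at $x=0$ is $\widehat A(y,\sigma)$, which is \emph{independent of $\eta$}: in \eqref{WedgeSymbolLocal} every $\eta$-dependent term carries at least one factor $x^{|\alpha|}$ with $|\alpha|\geq 1$ and therefore vanishes on the boundary. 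Consequently, if $\gamma-m$ hits $\spec_{b,y}(A)$ then the operator fails to have closed range on the minimal domain for \emph{every} $\eta$, and $(iv)$ is simply false without the weight hypothesis. What $\eta\neq 0$ actually buys is Fredholmness at the \emph{other} end, $x\to\infty$: there the $w$-ellipticity together with $\eta\neq 0$ makes the rescaled operator fully elliptic in the cone/scattering sense, which is why $A_\wedge(y,0)$ is never Fredholm but $A_\wedge(y,\eta)$ can be for $\eta\neq 0$. Both conditions are needed; the Proposition's statement of $(iv)$ is slightly imprecise (the weight condition should be understood as a standing hypothesis, as it is in Theorems~\ref{DminTheorem} and~\ref{MinLeftInverseandExpansion} where the Proposition is used). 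Your write-up should correct this rather than rationalize the omission.
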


The goal of this section is to prove the following theorem.

\begin{theorem}\label{DminTheorem}
Let $A \in x^{-m}\Diff^m_e(\M;E,F)$ be $w$-elliptic. If 
\begin{equation*}
\pi_{\C}\spec_{e}(A) \cap \{\sigma \in \C \st \Im\sigma = \gamma-m\}= \emptyset
\end{equation*}
and the normal family $A_{\wedge}(y,\eta) : \Dom_{\wedge,\min}(y) \subset x^{-\gamma}L^2_b \to x^{-\gamma}L^2_b$ is injective for all $(y,\eta) \in T^*\Y \setminus 0$, then
\begin{equation*}
\Dom_{\min}(A) = x^{-\gamma+m}H^m_{e}(\M;E).
\end{equation*}
Moreover, 
\begin{equation*} 
A : \Dom_{\min} \to x^{-\gamma}L^2_b(\M;F)
\end{equation*}
is a semi-Fredholm operator with finite-dimensional kernel and closed range.
\end{theorem}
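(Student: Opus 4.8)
The plan is to derive both statements from a one-sided parametrix of $A$, the construction of which is carried out in Section~\ref{ParametrixConstruction}. One inclusion, $x^{-\gamma+m}H^m_e(\M;E)\subseteq\Dom_{\min}(A)$, uses none of the hypotheses: since $A\in x^{-m}\Diff^m_e(\M;E,F)$, the map $A:x^{-\gamma+m}H^m_e(\M;E)\to x^{-\gamma}L^2_b(\M;F)$ is bounded, so the graph norm $\|\cdot\|_A$ is dominated by the $x^{-\gamma+m}H^m_e$-norm; and $C_c^\infty(\open\M;E)$ is dense in $x^{-\gamma+m}H^m_e(\M;E)$ (cut off near $\partial\M$ by $\omega(x/\eps)$ — the edge vector fields $xD_{y_j}$ and $D_{z_k}$ annihilate $\omega(x/\eps)$ while $xD_x$ acts on it by a uniformly bounded multiplier supported near $x=\eps$ — and then mollify away from the boundary). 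Hence the $\|\cdot\|_A$-closure of $C_c^\infty(\open\M;E)$ contains $x^{-\gamma+m}H^m_e(\M;E)$.

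For the opposite inclusion and the semi-Fredholm property I would use the parametrix $B$ of Section~\ref{ParametrixConstruction} applied to the unperturbed operator $A$. Under the two hypotheses one obtains a bounded operator $B:x^{-\gamma}L^2_b(\M;F)\to x^{-\gamma+m}H^m_e(\M;E)$ such that $G:=I-BA$ is bounded from $x^{-\gamma}L^2_b(\M;E)$ into $x^{-\gamma+m}H^m_e(\M;E)$ and is, moreover, compact as an operator on $x^{-\gamma+m}H^m_e(\M;E)$. Granting this, for $u\in C_c^\infty(\open\M;E)$ we have $u=BAu+Gu$, so
\begin{equation*}
\|u\|_{x^{-\gamma+m}H^m_e}\le \|B\|\,\|Au\|_{x^{-\gamma}L^2_b}+\|G\|\,\|u\|_{x^{-\gamma}L^2_b}\le C\|u\|_A .
\end{equation*}
Passing to $\|\cdot\|_A$-limits shows every $u\in\Dom_{\min}(A)$ lies in $x^{-\gamma+m}H^m_e(\M;E)$; combined with the first paragraph this gives $\Dom_{\min}(A)=x^{-\gamma+m}H^m_e(\M;E)$ with equivalent norms. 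The identity $BA_{\min}=I-G$ then persists on all of $\Dom_{\min}$ by continuity, so $B$ is a left regularizer of $A_{\min}$ modulo the compact operator $G$. From this the finite-dimensionality of $\ker A_{\min}\subseteq\ker(I-G)$ and the closedness of the range of $A_{\min}$ follow by the usual functional-analytic argument: split off a closed complement $W$ of the finite-dimensional space $\ker A_{\min}$ in $\Dom_{\min}$, and show $A_{\min}|_W$ is bounded below — if not, a unit sequence $u_k\in W$ with $Au_k\to0$ would give $(I-G)u_k=BAu_k\to0$, and compactness of $G$ would force a subsequence $u_k\to v\in W$ with $\|v\|=1$ and $A_{\min}v=0$, a contradiction.

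Thus the real work is the construction of $B$ with the stated properties of $G$, and this is where all three hypotheses enter. As announced, the problem is localized in $\Y$, using local coordinates $(y,\eta)$ on $T^*\Y$ and the reference Hilbert space $x^{-\gamma}L^2_b$ over $\Z^\wedge=\Z^\wedge_{y_0}$. The $w$-ellipticity furnishes the interior parametrix and the invertible principal symbol needed for the quantization. The condition $\pi_\C\spec_e(A)\cap\{\Im\sigma=\gamma-m\}=\emptyset$ makes $\widehat A(y,\sigma)^{-1}$ holomorphic on a strip about the line $\Im\sigma=\gamma-m$ (Lemma~\ref{ConormalSymbolFredholm}), yielding the Mellin parametrix in the cone/fiber direction with the correct weight; this is also what fixes the model minimal domains as in Proposition~\ref{WedgeMinProperties}(ii). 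Finally, the injectivity of the normal family on $\Dom_{\wedge,\min}(y)$ for $(y,\eta)\neq0$ — an injective Fredholm family by Proposition~\ref{WedgeMinProperties}(iv), $\kappa$-homogeneous by \eqref{WedgeSymbolKappaHom} and acting on the $\kappa_\varrho$-invariant domain of Proposition~\ref{WedgeMinProperties}(iii) — allows one to invert the operator-valued edge symbol from the left with norm bounds compatible with the $\kappa_\varrho$-action, hence to build the edge-parametrix piece; patching the local pieces over $\Y$ by a partition of unity and keeping the symbol estimates uniform in $(y,\eta)$ is the principal technical obstacle, handled in Section~\ref{ParametrixConstruction} via Schulze's edge pseudodifferential calculus \cite{SchuNH}.
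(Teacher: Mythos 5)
There is a genuine gap at the pivotal step of your argument — the estimate $\|u\|_{x^{-\gamma+m}H^m_e}\le \|B\|\,\|Au\|_{x^{-\gamma}L^2_b}+\|G\|\,\|u\|_{x^{-\gamma}L^2_b}$. For the second term you need $G=I-BA$ to act as a bounded map $x^{-\gamma}L^2_b(\M;E)\to x^{-\gamma+m}H^m_e(\M;E)$, i.e.\ to gain a full $m$ orders of weight. But this is not what the parametrix construction delivers: Remark~\ref{AParametrix} only asserts that $BA-1$ is compact \emph{in} $\L(x^{-\gamma+m}H^m_e(\M;E))$, and the explicit structure of the remainder in Section~\ref{ParametrixConstruction} (see the passage following \eqref{B3Parametrix}) shows the local Green amplitude of $G$ lies in $\L\bigl(\K^{s,-\gamma+m}_\delta,\K^{s',-\gamma+m+\eps}_{\delta'}\bigr)$: it gains only a small weight $\eps>0$, dictated by the width of the strip around $\Im\sigma=\gamma-m$ that is free of $\pi_\C\spec_e(A)$. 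Since that strip can be arbitrarily narrow, the large weight gain your estimate requires is not available, and the inequality as written does not follow from the parametrix.

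The paper sidesteps precisely this by reversing the order of deduction. From the compactness of $G$ on $x^{-\gamma+m}H^m_e$ (not on $x^{-\gamma}L^2_b$), one first concludes that $A: x^{-\gamma+m}H^m_e\to x^{-\gamma}L^2_b$ is semi-Fredholm with finite-dimensional kernel and closed range. An abstract functional-analytic lemma (Proposition~\ref{ClosedFredholmDomains}) then shows that any such semi-Fredholm operator is closed, i.e.\ $x^{-\gamma+m}H^m_e$ is complete in the graph norm; combined with the density of $C_c^\infty(\open\M;E)$ in $x^{-\gamma+m}H^m_e$ — the argument you sketch for this is essentially correct — this gives $\Dom_{\min}(A)=x^{-\gamma+m}H^m_e(\M;E)$. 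Your a priori estimate is then true \emph{a posteriori} by the open mapping theorem, but it cannot serve as the starting point. The rest of your plan — the easy inclusion $x^{-\gamma+m}H^m_e\subseteq\Dom_{\min}$ and the standard Riesz-type argument for closed range from the left regularizer modulo compacts — is correct once the domain identification has been secured the other way around.
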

Note that, in view of the $\kappa$-homogeneity of $A_{\wedge}$, it is sufficient to only check the injectivity of $A_{\wedge}(y,\eta)$ on the cosphere bundle $S^*\Y$.

\begin{example}\label{RegularEllipticmin}
Assume that $\Z=\{\text{pt}\}$, so $\Y = \partial\M$. Let $A\in \Diff^m(\M;E)$ be a regular elliptic operator on $\M$. Let $L^2(\M;E)$ be the $L^2$-space of sections of $E$ with respect to any smooth density on $\M$ and Hermitian metric on $E$, and consider $A$ as an unbounded operator
\begin{equation*}
A : C_c^{\infty}(\open\M;E) \subset L^2(\M;E) \to L^2(\M;E).
\end{equation*}
We have $L^2(\M;E) = x^{-1/2}L^2_b(\M;E)$, and the operator $A$ is a $w$-elliptic element of $x^{-m}\Diff^m_{e}(\M;E)$ with
\begin{equation*}
\spec_{e}(A) = \Y \times \{0,-i,\ldots,-(m-1)i\}.
\end{equation*}
In fact,
\begin{equation*}
\widehat{A}(y,\sigma) = \varphi(y) \sigma(\sigma+i)\cdots(\sigma+(m-1)i)
\end{equation*}
for some invertible $\varphi \in C^{\infty}(\Y;\Hom(E_{\Y}))$. Moreover, $A_{\wedge}(y,\eta)$ is a family of ordinary differential operators in the inward normal direction, parametrized by the cotangent bundle $T^*\Y$. More precisely, if
\begin{align*}
A &= \sum\limits_{k+|\alpha| \leq m} a_{k,\alpha}(x,y)D_y^{\alpha}D_x^k \\
\intertext{in coordinates $y_1,\ldots,y_{d-1}$ and $x$ near a boundary point, then}
A_{\wedge}(y,\eta) &= \sum\limits_{k+|\alpha| = m} a_{k,\alpha}(0,y)\eta^{\alpha}D_x^k.
\end{align*}
In particular,
\begin{equation*}
A_{\wedge}(y,\eta) : \Dom_{\wedge,\min} \subset x^{-1/2}L_b^2(\R_+;E^{\wedge}) \to x^{-1/2}L_b^2(\R_+;E^{\wedge})
\end{equation*}
is injective for $\eta \neq 0$ since $\Dom_{\wedge,\min} = H_0^m(\R_+;E^{\wedge})$. In view of Theorem~\ref{DminTheorem}, the minimal domain of $A$ in $L^2(\M;E)$ is
\begin{equation*}
\Dom_{\min}= x^{m-1/2}H^m_{e}(\M;E) = H^m_0(\M;E),
\end{equation*}
and $A : \Dom_{\min} \to L^2(\M;E)$ is semi-Fredholm.
\end{example}

\medskip
\begin{proof}[Proof of Theorem~\ref{DminTheorem}]
The proof is based on the parametrix construction performed in Section~\ref{ParametrixConstruction}. There we will show that under the assumptions of the theorem, there exists a left parametrix
\begin{equation*}
B : x^{-\gamma}L^2_b(\M;F) \to x^{-\gamma+m}H^m_{e}(\M;E)
\end{equation*}
of $A$ modulo compact remainders; see Remark~\ref{AParametrix}. Therefore, in particular,
\begin{equation*}
A : x^{-\gamma+m}H^m_{e}(\M;E) \to x^{-\gamma}L^2_b(\M;F)
\end{equation*}
is semi-Fredholm with finite-dimensional kernel and closed range. As a consequence of this and Proposition~\ref{ClosedFredholmDomains}, we get that $x^{-\gamma+m}H^m_{e}(\M;E)$ is complete in the graph norm of $A$. Thus $\Dom_{\min}(A) = x^{-\gamma+m}H^m_{e}(\M;E)$ since $C_c^{\infty}(\open\M;E)$ is dense in $x^{-\gamma+m}H^m_{e}(\M;E)$.
\end{proof}

\begin{proposition}\label{ClosedFredholmDomains}
Let $H_1$ and $H_2$ be Hilbert spaces, and let $D$ be a Banach subspace of $H_1$ equipped with a norm $\|\cdot\|_D$ such that the inclusion map $(D,\|\cdot\|_D) \to (H_1,\|\cdot\|_{H_1})$ is continuous. Let $A : D\subset H_1 \to H_2$ be continuous and assume that:
\begin{enumerate}[$(i)$]
\item The range of $A$ is closed.
\item The kernel $N(A) \subset H_1$ is closed with respect to $\|\cdot\|_{H_1}$.
\end{enumerate}
Then the operator $A$ with domain $D$ is closed, i.e., $D$ is complete with respect to the graph norm $\|u\|_A = \|u\|_{H_1} + \|Au\|_{H_2}$.
\end{proposition}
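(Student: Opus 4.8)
The plan is to reduce to the injective case by quotienting out the kernel, and then to invoke the open mapping theorem. First I would observe that $N(A)$, being closed in $H_1$, is a Hilbert space in its own right, and that since the inclusion $(D,\|\cdot\|_D)\to(H_1,\|\cdot\|_{H_1})$ is continuous and $N(A)\subset D$, the two norms $\|\cdot\|_D$ and $\|\cdot\|_{H_1}$ are both defined on $N(A)$; I will need to know they are comparable there. This will follow once the proposition is proved, but to get started I would instead work with the graph norm $\|\cdot\|_A$ directly and show completeness of $(D,\|\cdot\|_A)$ from scratch.

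The main step is as follows. Let $\{u_j\}\subset D$ be a Cauchy sequence in the graph norm $\|\cdot\|_A$. Then $u_j\to u$ in $H_1$ and $Au_j\to w$ in $H_2$ for some $u\in H_1$, $w\in H_2$; I must produce $v\in D$ with $v=u$ in $H_1$ (so that the limit lies in $D$) and $Av=w$. Here is where closedness of the range enters: since $Au_j\in R(A)$ and $R(A)$ is closed, $w\in R(A)$, so $w=Av_0$ for some $v_0\in D$. Now consider $d_j=u_j-v_0\in D$; then $Ad_j=Au_j-w\to 0$ in $H_2$, while $d_j\to u-v_0$ in $H_1$. It therefore suffices to treat the case $w=0$, i.e. to show: if $d_j\in D$, $Ad_j\to 0$ in $H_2$, and $d_j\to \delta$ in $H_1$, then $\delta\in D$ and $A\delta=0$ — and in fact we want $d_j\to\delta$ in the graph norm so the original sequence converges to $v_0+\delta\in D$.

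To handle this reduced problem I would apply the open mapping theorem to the induced map $\overline{A}:(D/N(A),\|\cdot\|_A) \to R(A)$, where $D/N(A)$ carries the quotient graph norm. One checks $(D,\|\cdot\|_A)$ is a normed space on which $A$ is bounded with $\ker = N(A)$; the key point is that the quotient $(D/N(A),\|\cdot\|_A)$ need not \emph{a priori} be complete, so the open mapping theorem is not immediately available — this is the main obstacle. I would circumvent it by instead using the following standard fact: a bounded injective operator between a normed space and a \emph{Banach} space with closed range need not be bounded below, but here $R(A)$ is a closed subspace of the Hilbert space $H_2$, hence complete, and the map $A|_{D/N(A)}$ is a continuous bijection onto $R(A)$ which is \emph{moreover} bounded below once we know $(D/N(A),\|\cdot\|_A)$ is complete. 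So I would close the loop by a direct argument: given $d_j$ with $Ad_j\to 0$ and $d_j\to\delta$ in $H_1$, write $d_j=n_j+e_j$ with $n_j\in N(A)$ the $\|\cdot\|_{H_1}$-orthogonal projection of $d_j$ onto the (closed, hence Hilbert) subspace $N(A)$, and $e_j\perp N(A)$ in $H_1$. Then $Ae_j=Ad_j\to 0$. A compactness/closed-range argument on the injective restriction $A:e\mapsto Ae$ from $D\cap N(A)^{\perp}$ gives $\|e_j\|_{H_1}\le C\|Ae_j\|_{H_2}\to 0$ (this is exactly where hypothesis $(i)$ forces the estimate, via the open mapping theorem applied now on the genuinely complete Hilbert space $N(A)^{\perp}\cap\overline{D}$ — or, cleanly, one notes $A:D\to H_2$ has closed range and finite-\emph{or}-infinite-dimensional kernel closed in $H_1$, so $A$ descends to a bounded bijection $D/N(A)\to R(A)$ which we \emph{assume} bounded below and verify). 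Hence $e_j\to 0$ in $H_1$, so $n_j\to\delta$ in $H_1$; since $N(A)$ is $\|\cdot\|_{H_1}$-closed, $\delta\in N(A)\subset D$ and $A\delta=0$. Finally $\|d_j-\delta\|_A=\|d_j-\delta\|_{H_1}+\|Ad_j\|_{H_2}\to 0$, and therefore the original Cauchy sequence $u_j=v_0+d_j$ converges to $v_0+\delta\in D$ in the graph norm. This shows $(D,\|\cdot\|_A)$ is complete, which is the assertion. The delicate point to get right in the write-up is justifying the lower bound $\|e_j\|_{H_1}\le C\|Ae_j\|_{H_2}$ without circularity; the honest route is to apply the open mapping theorem to $A$ regarded as a map from $D$ with its \emph{given} complete norm $\|\cdot\|_D$ onto its closed range, deduce the estimate $\|u\|_{H_1}\le C(\|Au\|_{H_2}+\|u\|_{N})$ modulo the kernel, and then push this through the above limiting argument — so in fact I would open the proof by invoking completeness of $(D,\|\cdot\|_D)$ (given in the hypothesis) rather than trying to prove completeness of the graph norm in a vacuum.
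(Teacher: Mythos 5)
Your route---reduce to $Au_j \to 0$ by subtracting a preimage $v_0 \in D$ of the limit, split $d_j = n_j + e_j$ by $H_1$-orthogonal projection onto $N(A)$, and drive $e_j \to 0$---is genuinely different from the paper's and can be made rigorous, but as you yourself flag, the crucial estimate $\|e_j\|_{H_1} \le C\|Ae_j\|_{H_2}$ is not actually derived: the parenthetical ``which we \emph{assume} bounded below and verify'' is not an argument, and the inequality ``$\|u\|_{H_1}\le C(\|Au\|_{H_2}+\|u\|_{N})$ modulo the kernel'' is too vague to use. You have the right tool in hand; here is how to deploy it. Applying the open mapping theorem to the continuous surjection $A : (D,\|\cdot\|_D) \to (R(A),\|\cdot\|_{H_2})$ between Banach spaces furnishes $C > 0$ such that every $w \in R(A)$ has a preimage $v \in D$ with $\|v\|_D \le C\|w\|_{H_2}$. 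Choose $v_j \in D$ with $Av_j = Ae_j$ and $\|v_j\|_D \le C\|Ae_j\|_{H_2}$. Then $e_j - v_j \in N(A)$ while $e_j \perp N(A)$ in $H_1$, so $\|e_j\|_{H_1}^2 = \langle v_j, e_j\rangle_{H_1} \le \|v_j\|_{H_1}\|e_j\|_{H_1} \le C'\|v_j\|_D\|e_j\|_{H_1}$, using the continuity of the inclusion $D \hookrightarrow H_1$; hence $\|e_j\|_{H_1} \le CC'\|Ae_j\|_{H_2} \to 0$, and the rest of your argument goes through unchanged.

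For comparison, the paper avoids any explicit bound. It fixes a projection $\Pi \in \L(H_1)$ onto $N(A)$; since $\Pi(D) = N(A) \subset D$ and $D \hookrightarrow H_1$ continuously, the closed graph theorem shows $\Pi \in \L(D)$, so $A$ restricted to $(I-\Pi)(D)$ is a topological isomorphism onto $R(A)$ by the open mapping theorem. Composing its inverse with a projection onto $R(A)$ gives a bounded $P \in \L(H_2,D)$ with $PA = I - \Pi$ on $D$, and closedness of $A$ is read off by pushing a graph-norm Cauchy sequence through $P$ and the continuous inclusion $D \hookrightarrow H_1$. Your approach makes the quantitative mechanism visible; the paper's is slicker in that the closed graph theorem does the bookkeeping and no inequalities need to be written down.
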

Note that assumptions $(i)$ and $(ii)$ are fulfilled if $A$ is a semi-Fredholm operator with finite dimensional kernel (thus, in particular, if $A : D \to H_2$ is Fredholm).

\begin{proof}
Let $\Pi = \Pi^2 \in \L(H_1)$ be a projection onto $N(A)$. Since $\Pi(D) = N(A) \subset D$, and since the inclusion map $(D,\|\cdot\|_D) \to (H_1,\|\cdot\|_{H_1})$ is continuous, we conclude that the graph of the restriction $\Pi : D \to D$ is closed. Hence $\Pi \in \L(D)$ by the closed graph theorem (note that $D$ is complete with respect to $\|\cdot\|_D$). So $\Pi$ is a continuous projection onto $N(A)$, both on $H_1$ and on $D$. Write
$$
D = N(A) \oplus (I-\Pi)(D), \quad H_2 = X \oplus R(A).
$$
Then $A|_{(I-\Pi)(D)} : (I-\Pi)(D) \to R(A)$ is a topological isomorphism. Let
$$
P = \bigl(A|_{(I-\Pi)(D)}\bigr)^{-1}\circ Q : H_2 \to D,
$$
where $Q = Q^2 \in \L(H_2)$ is the projection onto $R(A)$ according to the decomposition $H_2 = X \oplus R(A)$. Then $P \in \L(H_2,D)$ and $PA = I-\Pi$ on $D$.

Now assume that $(u_k)_k \subset D$ is such that $u_k \to u$ in $H_1$ and $Au_k \to v$ in $H_2$. We have to show that $u \in D$ and that $v = Au$. Since $A : D \to H_2$ has closed range, we conclude that there is some $\bar{u} \in D$ such that $Au_k \to A\bar{u} = v$ in $H_2$. Consequently,
\begin{equation*}
PAu_k = (I-\Pi)u_k \to PA\bar{u} = (I-\Pi)\bar{u} \;\text{ in } D.
\end{equation*}
But since $D \hookrightarrow H_1$, we also have $(I-\Pi)u_k \to (I-\Pi)\bar{u}$ in $H_1$. Since $(I-\Pi) \in \L(H_1)$ and $u_k \to u$ in $H_1$, we conclude that $(I-\Pi)u = \lim\limits_{k \to \infty}(I-\Pi)u_k = (I-\Pi)\bar{u}$. Thus
\begin{align*}
u &= \Pi u + (I - \Pi)u = \Pi u + (I - \Pi)\bar{u} \in D, 
\intertext{and moreover,}
Au &= A\Pi u + A(I-\Pi)u = A(I-\Pi)\bar{u} = A\bar{u} = v.
\end{align*}
\end{proof}

\section{The resolvent family}\label{Sec-ResolventFamily}

Let $A$ be an elliptic wedge operator of order $m > 0$. Let $\Lambda$ be a ray or a closed sector properly contained in $\C$. Consider the operator family
\begin{equation*}
A - \lambda : \Dom_{\min}(A) \to x^{-\gamma}L^2_b(\M;E).
\end{equation*}
Our goal is the following theorem.

\begin{theorem}\label{MinLeftInverseandExpansion}
Let $A \in x^{-m}\Diff^m_{e}(\M;E)$ with $m > 0$ be such that
\begin{equation*}
\spec(\wsym(A)) \cap \Lambda = \emptyset \;\text{ on } \wT^*\M \setminus 0,
\end{equation*}
and assume that
\begin{equation*}
\pi_{\C}\spec_{e}(A) \cap \{\sigma \in \C \st \Im(\sigma) = \gamma-m\} = \emptyset.
\end{equation*}
If the normal family $A_{\wedge}(y,\eta) - \lambda : \Dom_{\wedge,\min}(y)\subset x^{-\gamma}L^2_b \to x^{-\gamma}L^2_b$ is injective for all $(y,\eta,\lambda) \in (T^*\Y\times\Lambda)\setminus 0$, then
\begin{equation}\label{Aminminuslambda}
A - \lambda : \Dom_{\min}(A) \to x^{-\gamma}L^2_b(\M;E)
\end{equation}
is injective for $\lambda \in \Lambda$ with $|\lambda|$ large, and there exists a parametrix
\begin{equation*}
B(\lambda) : x^{-\gamma}L^2_b(\M;E) \to \Dom_{\min}(A)
\end{equation*}
of $A-\lambda$ with the following properties. For $|\lambda|$ large, $B(\lambda)$ is a left inverse of \eqref{Aminminuslambda}, and for $\alpha = (\alpha_1,\alpha_2) \in \N_0^2$ with $1+|\alpha| > \frac{d}{m}$,
the operator family
\begin{equation*}
\partial_{\lambda}^{\alpha_1}\partial_{\overline{\lambda}}^{\alpha_2}B(\lambda) :
x^{-\gamma}L^2_b(\M;E) \to x^{-\gamma}L^2_b(\M;E)
\end{equation*}
is of trace class. Moreover, for every $\varphi \in C^{\infty}(\M;\Hom(E))$, the trace
\begin{equation*}
t(\lambda) = \Tr \bigl(\varphi \partial_{\lambda}^{\alpha_1}\partial_{\overline{\lambda}}^{\alpha_2} B(\lambda)\bigr): \Lambda \to \C
\end{equation*}
is a symbol on $\Lambda$ having an asymptotic expansion
\begin{equation}\label{tlambdaexpansion}
 t(\lambda) \sim \sum_{j=0}^{\infty}\sum_{k=0}^{m_j}t_{jk}(\lambda)\log^k(\lambda) \text{ as } |\lambda| \to \infty
\end{equation}
with functions $t_{jk}$ that satisfy the homogeneity relation
\begin{equation*}
t_{jk}(\varrho\lambda) = \varrho^{\frac{d-j}{m}-1-|\alpha|}t_{jk}(\lambda) \text{ for } \varrho > 0.
\end{equation*}
Here $m_j \leq 1$ for all $j$, and $m_j = 0$ for $j \leq n=\dim \Z$. Recall that $d = \dim\M$.
\end{theorem}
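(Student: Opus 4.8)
The plan is to reduce Theorem~\ref{MinLeftInverseandExpansion} to the parametrix construction of Section~\ref{ParametrixConstruction}, which produces, under the stated ellipticity and injectivity hypotheses, a parameter-dependent operator $B(\lambda)$ belonging to a suitable calculus of parameter-dependent operators on the wedge $\M$ (a parameter-dependent version of Schulze's edge calculus, incorporating $\lambda\in\Lambda$ as an additional Mellin/covariable) such that
\begin{equation*}
B(\lambda)(A-\lambda) = I - G_L(\lambda),\qquad (A-\lambda)B(\lambda) = I - G_R(\lambda)
\end{equation*}
with remainders $G_L(\lambda), G_R(\lambda)$ that are not merely compact but \emph{rapidly decaying} in $\lambda$, i.e. $O(|\lambda|^{-N})$ in appropriate operator norms (in fact smoothing and of Green type) for every $N$, uniformly on $\Lambda$. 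First I would record that such remainders being small for $|\lambda|$ large forces $A-\lambda$ on $\Dom_{\min}(A) = x^{-\gamma+m}H^m_e(\M;E)$ (the latter identification by Theorem~\ref{DminTheorem}, whose hypotheses are implied here) to be injective with closed range; combined with $B(\lambda)(A-\lambda)=I-G_L(\lambda)$ and a Neumann series argument, $\big(I-G_L(\lambda)\big)^{-1}B(\lambda)$ is a genuine left inverse for $|\lambda|$ large, and replacing $B(\lambda)$ by this corrected operator I may assume $B(\lambda)$ is an exact left inverse. The symbolic mapping properties of the calculus give, for $\lambda$ in the interior of $\Lambda$ with $|\lambda|$ large, that $\lambda\mapsto B(\lambda)$ is a smooth (indeed holomorphic in the resolvent-like sense on each ray, once one exploits that $A$ is a differential operator) operator-valued symbol of order $-m$ in $\lambda$, whence the derivative estimates and the trace-class claim.

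The trace-class statement and the symbol estimate on $t(\lambda)$ I would get from the structure of the calculus: an element of the parameter-dependent edge calculus of order $-m$ acting on $x^{-\gamma}L^2_b(\M;E)$ over a compact $\M$ of dimension $d$ becomes trace class once the order is below $-d$, which is exactly the threshold $1+|\alpha| > d/m$ after differentiating $\alpha$ times (each $\partial_\lambda$ or $\partial_{\bar\lambda}$ lowers the order by one). Multiplication by $\varphi\in C^\infty(\M;\Hom(E))$ preserves the calculus, so $\varphi\,\partial_\lambda^{\alpha_1}\partial_{\bar\lambda}^{\alpha_2}B(\lambda)$ is trace class and its trace is a classical symbol on $\Lambda$; the $\log$ powers enter only through the anisotropic homogeneity of the edge-calculus symbols (specifically through the interplay of the $\kappa$-homogeneity \eqref{WedgeSymbolKappaHom} of the normal family with the Mellin quantization along $x$, which produces the $\log^k\lambda$ terms and the bound $m_j\le 1$, with $m_j=0$ for $j\le n$ reflecting that logarithmic terms cannot appear before the contribution of the fiber $\Z$ of dimension $n$ is reached). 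Establishing the asymptotic expansion \eqref{tlambdaexpansion} with the stated homogeneity $t_{jk}(\varrho\lambda)=\varrho^{(d-j)/m-1-|\alpha|}t_{jk}(\lambda)$ is precisely the content of Section~\ref{ExpansionProof}, and I would simply invoke that: the trace decomposes into a contribution from the interior (a standard Seeley-type expansion via the parameter-dependent local symbol, contributing the terms $j\le n$ with no logs), a contribution near the edge coming from the trace of the parameter-dependent normal-family parametrix along the fibers of $\wp$, integrated over $\Y$, and cross terms; tracking the homogeneities and the Mellin-asymptotics of the conormal symbol gives the exponents $(d-j)/m-1-|\alpha|$ and the logarithmic multiplicities.

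The main obstacle is the parametrix construction itself and the uniformity of its error estimates in $\lambda$ — in particular, inverting the normal family $A_\wedge(y,\eta)-\lambda$ on $\Dom_{\wedge,\min}(y)$ uniformly for $(y,\eta,\lambda)\in (T^*\Y\times\Lambda)\setminus 0$, and assembling these fiberwise inverses into a global operator-valued symbol compatible with the $\kappa$-homogeneity. This is where the hypotheses are used essentially: $w$-ellipticity plus $\spec(\wsym(A))\cap\Lambda=\emptyset$ gives an interior parameter-dependent parametrix, the condition $\pi_\C\spec_e(A)\cap\{\Im\sigma=\gamma-m\}=\emptyset$ guarantees the weight line avoids the conormal spectrum so that the Mellin inversion along $x$ is well-posed in the weighted space and $\Dom_{\wedge,\min}(y)$ has the explicit form in Proposition~\ref{WedgeMinProperties}(ii), and the injectivity of $A_\wedge(y,\eta)-\lambda$ together with its Fredholmness (Proposition~\ref{WedgeMinProperties}(iv), extended to include $\lambda$) and a homogeneity/compactness argument on the compact set $(S^*\Y\times\Lambda)\cup\ldots$ upgrades injectivity to uniform invertibility with a $\kappa$-homogeneous inverse of order $-m$. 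Since all of this is deferred to Section~\ref{ParametrixConstruction} and Section~\ref{ExpansionProof}, the present proof is short: cite those sections for $B(\lambda)$ and its error bounds, deduce injectivity of \eqref{Aminminuslambda} and the left-inverse property by a Neumann series, read off the trace-class property and symbol estimates from the order count in the calculus, and quote the expansion from Section~\ref{ExpansionProof}.
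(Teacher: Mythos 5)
Your proposal follows the paper's own approach: build $B(\lambda)$ via the Schulze-style parametrix construction of Section~\ref{ParametrixConstruction}, observe that it is an exact left inverse for large $|\lambda|$ (giving injectivity of $A-\lambda$ on $\Dom_{\min}$), and then decompose $\varphi\,\partial_{\lambda}^{\alpha_1}\partial_{\overline\lambda}^{\alpha_2}B(\lambda)$ into an interior piece and a piece supported near $\partial\M$, quoting the Seeley-type interior expansion and the log-polyhomogeneous fiberwise trace expansion of the operator-valued boundary symbol (via \cite{GKM4}) respectively, exactly as in Section~\ref{ExpansionProof}. Two points deserve correction, though neither is fatal to your argument. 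First, you assert that the parametrix construction yields remainders on \emph{both} sides, $B(\lambda)(A-\lambda)=I-G_L(\lambda)$ and $(A-\lambda)B(\lambda)=I-G_R(\lambda)$, with $G_L,G_R$ rapidly decaying. Under the hypothesis of injectivity of the normal family alone, only the left remainder $G_L(\lambda)$ is controlled in this strong sense (and in fact vanishes for $|\lambda|$ large); the right composition is merely $I$ modulo compact operators. Getting a rapidly decaying right remainder is precisely what the additional surjectivity hypothesis in Corollary~\ref{Resolventexpansion} buys — conflating the two erases the distinction the paper is carefully drawing between Theorem~\ref{MinLeftInverseandExpansion} and its corollary. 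Since your downstream argument (injectivity via a left inverse, trace expansion of $B(\lambda)$ itself) only uses the left relation, this is an overstatement rather than a gap. Second, you say ``each $\partial_\lambda$ or $\partial_{\overline\lambda}$ lowers the order by one,'' which would give the wrong threshold; in the anisotropic parameter-dependent calculus each $\lambda$-derivative lowers the order by $m$, so $|\alpha|$ derivatives take the order from $-m$ to $-m(1+|\alpha|)$, and trace class requires $m(1+|\alpha|)>d$, i.e.\ $1+|\alpha|>d/m$ as you correctly state at the end. Finally, the ``cross terms'' you mention do not appear in the paper: the decomposition $P(\lambda)=P_{\text{int}}(\lambda)+P_{\partial}(\lambda)$ modulo negligible (rapidly decaying trace-class) terms is clean.
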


The proof of this theorem is given in Section~\ref{ExpansionProof}. It relies on the parametrix construction presented in Section~\ref{ParametrixConstruction}. 

The logarithm in formula \eqref{tlambdaexpansion} is a holomorphic branch of the logarithm with respect to some cut $\Gamma \not\subset \Lambda$. The powers and logarithms in Corollary~\ref{Resolventexpansion} below are to be understood in the same way.

\begin{corollary}\label{Resolventexpansion}
Let $A$ satisfy all the assumptions of Theorem~\ref{MinLeftInverseandExpansion}. If in addition the normal family
\begin{equation*}
A_{\wedge}(y,\eta) - \lambda : \Dom_{\wedge,\min}(y) \subset x^{-\gamma}L^2_b \to x^{-\gamma}L^2_b
\end{equation*}
is surjective $($and therefore bijective$)$ for all $(y,\eta,\lambda) \in (T^*\Y\times\Lambda)\setminus 0$, then $\Lambda$ is a ray/sector of minimal growth for $A_{\min}$. In other words, the family $A_{\min}-\lambda$ is invertible for $\lambda \in \Lambda$ with $|\lambda|$ large, and
\begin{equation*}
\|\bigl(A_{\min}-\lambda\bigr)^{-1}\|_{\L(x^{-\gamma}L^2_b)} = O(|\lambda|^{-1}) \text{ as } |\lambda| \to \infty,\; \lambda \in \Lambda.
\end{equation*}
Moreover, for every $\ell \in \N$ with $\ell > \frac{d}{m}$,
\begin{equation*}
\bigl(A_{\min}-\lambda\bigr)^{-\ell} : x^{-\gamma}L^2_b(\M;E) \to x^{-\gamma}L^2_b(\M;E)
\end{equation*}
is of trace class, and for every $\varphi \in C^{\infty}(\M;\Hom(E))$ we have an expansion
\begin{equation*}
\Tr\bigl(\varphi\bigl(A_{\min}-\lambda\bigr)^{-\ell}\bigr) \sim
\sum_{j=0}^{\infty}\sum_{k=0}^{m_j} \alpha_{jk}\lambda^{\frac{d-j}{m}-\ell}\log^k(\lambda) \text{ as } |\lambda| \to \infty.
\end{equation*}
Here $m_j \leq 1$ for all $j$, and $m_j = 0$ for $j \leq n$.
\end{corollary}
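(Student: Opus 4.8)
\textbf{Plan for the proof of Corollary~\ref{Resolventexpansion}.} The plan is to deduce everything from Theorem~\ref{MinLeftInverseandExpansion} together with the extra surjectivity hypothesis on the normal family, by promoting the left inverse $B(\lambda)$ to the genuine resolvent. First I would observe that under the additional assumption, $A_\wedge(y,\eta)-\lambda$ is bijective on $(T^*\Y\times\Lambda)\setminus 0$, so the parametrix $B(\lambda)$ constructed in Section~\ref{ParametrixConstruction} is in fact a two-sided parametrix: one gets $(A-\lambda)B(\lambda)=I-G_R(\lambda)$ and $B(\lambda)(A-\lambda)=I-G_L(\lambda)$ with $G_R(\lambda),G_L(\lambda)$ rapidly decaying in $\L$-norm as $|\lambda|\to\infty$. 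Hence for $|\lambda|$ large both $I-G_R(\lambda)$ and $I-G_L(\lambda)$ are invertible by Neumann series, $A-\lambda:\Dom_{\min}(A)\to x^{-\gamma}L^2_b$ is bijective, and $(A_{\min}-\lambda)^{-1}=(I-G_L(\lambda))^{-1}B(\lambda)=B(\lambda)(I-G_R(\lambda))^{-1}$. The operator norm bound $\|(A_{\min}-\lambda)^{-1}\|=O(|\lambda|^{-1})$ then follows from the corresponding estimate for $B(\lambda)$ coming from the parametrix construction (the parametrix is built so that $B(\lambda)=O(|\lambda|^{-1})$ in $\L(x^{-\gamma}L^2_b)$), which exhibits $\Lambda$ as a ray/sector of minimal growth.

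Next I would pass from $B(\lambda)$ to the powers $(A_{\min}-\lambda)^{-\ell}$. The clean route is the resolvent identity $\partial_\lambda^{\ell-1}(A_{\min}-\lambda)^{-1}=(\ell-1)!\,(A_{\min}-\lambda)^{-\ell}$, so it suffices to control $\ell-1$ derivatives in $\lambda$ of the resolvent, and by the factorization $(A_{\min}-\lambda)^{-1}=B(\lambda)+B(\lambda)G_R(\lambda)(I-G_R(\lambda))^{-1}$ these derivatives are expressed through $\partial_\lambda^{\alpha_1}\partial_{\bar\lambda}^{\alpha_2}B(\lambda)$ (and $\partial$-derivatives of $G_R$, $(I-G_R)^{-1}$, all of which are smoothing and rapidly decaying). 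Choosing $\ell>\frac dm$ guarantees $1+|\alpha|>\frac dm$ for the relevant multi-indices $|\alpha|=\ell-1$, so Theorem~\ref{MinLeftInverseandExpansion} gives trace-class membership of each summand; since a composition of a trace-class operator with bounded operators is trace class, $(A_{\min}-\lambda)^{-\ell}$ is trace class. Then $\Tr(\varphi(A_{\min}-\lambda)^{-\ell})=\frac{1}{(\ell-1)!}\partial_\lambda^{\ell-1}\Tr(\varphi(A_{\min}-\lambda)^{-1})$, and I would write $\Tr(\varphi(A_{\min}-\lambda)^{-1})=t(\lambda)+$ (trace of the correction term $\varphi B(\lambda)G_R(\lambda)(I-G_R(\lambda))^{-1}$), where $t(\lambda)$ is the symbol with the expansion \eqref{tlambdaexpansion} (with $\alpha=0$) furnished by the theorem and the correction term is a symbol of order $-\infty$, contributing nothing to the asymptotic expansion.

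Finally I would extract the expansion. Differentiating the asymptotic expansion $t(\lambda)\sim\sum_{j,k}t_{jk}(\lambda)\log^k\lambda$ term by term $\ell-1$ times is legitimate because $t(\lambda)$ is a classical symbol on $\Lambda$ (symbol estimates are preserved under $\partial_\lambda$, lowering the order by one, and the asymptotic expansion differentiates accordingly). Using the homogeneity $t_{jk}(\varrho\lambda)=\varrho^{\frac{d-j}{m}-1}t_{jk}(\lambda)$ one checks that each $t_{jk}(\lambda)\log^k\lambda$ is a finite linear combination of terms $c\,\lambda^{\frac{d-j}{m}-1}\log^{k'}\lambda$ (the $\log$ can also be differentiated, producing lower-order-log terms that get reabsorbed), and differentiating $\ell-1$ times produces precisely terms $\alpha_{jk}\lambda^{\frac{d-j}{m}-\ell}\log^k\lambda$ after collecting; the bound $m_j\le 1$ and $m_j=0$ for $j\le n$ are inherited verbatim from Theorem~\ref{MinLeftInverseandExpansion}, since differentiation in $\lambda$ does not create new logarithmic powers (it can only lower them). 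The main obstacle, and the only nontrivial point, is ensuring that the purely algebraic bootstrap from the left parametrix to the true resolvent really does preserve both the $O(|\lambda|^{-1})$ estimate and the symbolic/trace-class structure under $\lambda$-differentiation — i.e., that the remainder terms $G_R(\lambda)$ and all their $\partial_\lambda,\partial_{\bar\lambda}$-derivatives are $O(|\lambda|^{-N})$ for every $N$ in trace norm, which is exactly what the parametrix construction of Section~\ref{ParametrixConstruction} is designed to deliver; once that is in hand the corollary is a formal consequence of Theorem~\ref{MinLeftInverseandExpansion}.
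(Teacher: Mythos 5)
Your overall strategy is the same as the paper's: under the extra surjectivity hypothesis the parametrix $B(\lambda)$ of Section~\ref{ParametrixConstruction} is a two-sided inverse of $A_{\min}-\lambda$ for large $|\lambda|$, so the expansion of Theorem~\ref{MinLeftInverseandExpansion} directly governs the resolvent and its $\lambda$-derivatives. Two points, however, deserve correction.

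First, the way you extract the expansion is backwards. You propose to write down the expansion of $t(\lambda)=\Tr\bigl(\varphi(A_{\min}-\lambda)^{-1}\bigr)$ ``with $\alpha=0$'' and then differentiate it $\ell-1$ times term by term. But the theorem only supplies an expansion of $\Tr\bigl(\varphi\,\partial_\lambda^{\alpha_1}\partial_{\bar\lambda}^{\alpha_2}B(\lambda)\bigr)$ under the constraint $1+|\alpha|>d/m$; with $\alpha=0$ this requires $m>d$, which is false except in degenerate situations, and indeed $B(\lambda)$ itself is typically not trace class. The object you intend to differentiate does not exist in general. The correct (and shorter) route is exactly what the theorem already hands you: apply it directly with $\alpha=(\ell-1,0)$, use $(A_{\min}-\lambda)^{-\ell}=\frac{1}{(\ell-1)!}\partial_\lambda^{\ell-1}(A_{\min}-\lambda)^{-1}=\frac{1}{(\ell-1)!}\partial_\lambda^{\ell-1}B(\lambda)$ for $|\lambda|$ large, and read off both the trace-class membership and the expansion with the homogeneity exponent $\frac{d-j}{m}-1-|\alpha|=\frac{d-j}{m}-\ell$. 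No term-by-term differentiation of an expansion is needed.

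Second, passing from the homogeneity relation $t_{jk}(\varrho\lambda)=\varrho^{\frac{d-j}{m}-\ell}t_{jk}(\lambda)$ to the stated form $\alpha_{jk}\lambda^{\frac{d-j}{m}-\ell}$ requires an argument you omit: a function on a sector that is homogeneous of degree $\mu$ under real positive dilations need not be a constant times $\lambda^\mu$ (it could depend on $\arg\lambda$, or on $\bar\lambda$). It is the analyticity of the resolvent $\lambda\mapsto(A_{\min}-\lambda)^{-1}$ that forces the coefficients of the trace expansion to be holomorphic, and a holomorphic homogeneous function of degree $\mu$ on a sector is necessarily $c\lambda^\mu$. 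This is precisely the point the paper flags; your sentence ``one checks that each $t_{jk}(\lambda)\log^k\lambda$ is a finite linear combination of terms $c\lambda^{\cdot}\log^{k'}\lambda$'' glosses over it. With these two repairs your argument becomes the paper's proof; the extra Neumann-series bootstrapping in your first paragraph is harmless but superfluous, since the parametrix construction already arranges $B(\lambda)=(A_{\min}-\lambda)^{-1}$ exactly for $|\lambda|$ large.
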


\begin{proof}[Proof of the corollary]
Since the normal family $A_{\wedge}(y,\eta) - \lambda$ is assumed to be bijective on $\Dom_{\wedge,\min}(y)$, the parametrix $B(\lambda)$ constructed in Section~\ref{ParametrixConstruction} is an actual inverse of $A_{\min} - \lambda$ for large $\lambda \in \Lambda$; see the comments right after Theorem~\ref{ParametrixTheorem}. Thus, for $\lambda$ large, the resolvent $(A_{\min}-\lambda)^{-1}$ coincides with $B(\lambda)$ and the asymptotic expansion \eqref{tlambdaexpansion} applies to the $\lambda$-derivatives of the resolvent.   
Note that the analyticity of the resolvent implies the analyticity of the components of the trace expansion. Thus the asymptotic expansion can indeed be written in terms of powers of $\lambda$ and logarithms, as claimed.

Finally, that $\Lambda$ is a ray/sector of minimal growth for $A_{\min}$ follows immediately from the structure of the parametrix $B(\lambda)$ described in Section~\ref{ParametrixConstruction}.
\end{proof}

\section{Parametrix construction}\label{ParametrixConstruction}

Let $\Lambda$ be a closed sector in $\C$. Given $A \in x^{-m}\Diff^m_{e}(\M;E)$ with $m>0$, our main goal here is to construct a parametrix $B(\lambda)$ to
\begin{equation*}
A-\lambda: x^{-\gamma+m}H^m_{e}(\M;E) \to x^{-\gamma}L^2_b(\M;E), \ \lambda\in\Lambda,
\end{equation*}
under the assumptions and with the properties listed in Theorem~\ref{MinLeftInverseandExpansion}. 
The methods involved in our construction are known. We will follow the approach developed by Schulze for the study of pseudodifferential operators on manifolds with edge singularities; see e.g. \cite{SchuNH}. This approach is the most direct and closest to what we need in this paper. 

\subsection*{Assumptions.}
Throughout this section we will assume:
\begin{center}
\parbox{.9\textwidth}{
\begin{enumerate}[1.]
\item $\spec(\wsym(A))\cap \Lambda=\emptyset$ on $\wT^*\M\setminus 0$ \ ($w$-ellipticity with parameter).
\item $\pi_{\C}\spec_{e}(A) \cap \{\sigma \in \C \st \Im \sigma = \gamma-m\}= \emptyset$.
\item The normal family
$$
A_{\wedge}(y,\eta) - \lambda : \Dom_{\wedge,\min}(y) \subset x^{-\gamma}L^2_b \to x^{-\gamma}L^2_b
$$
is injective for all $(y,\eta,\lambda) \in (T^*\Y\times\Lambda)\setminus 0$.
\end{enumerate}
} 
\end{center}
Under these assumptions, we will follow essentially three steps to construct our parametrix $B(\lambda)$. In the first two steps we make use of the $w$-ellipticity with parameter and the assumption on the $e$-spectrum to find a parametrix of $A-\lambda$ modulo operators that are smoothing on $\open\M$. We then use the third assumption to invert $A-\lambda$ from the left modulo compact remainders for any $\lambda$, and to obtain an actual left inverse for large $|\lambda|$. 

\medskip
In adapted local coordinates near the boundary of $\M$, the complete symbol of $A-\lambda$ has the form
\begin{equation*}
x^{-m}\biggl(\,\sum_{k+|\alpha|+|\beta|\leq m} a_{k\alpha\beta}(x,y,z)(x\xi)^k (x\eta)^\alpha \zeta^\beta -x^m\lambda\biggr)
\end{equation*}
with coefficients $a_{k\alpha\beta}$ that are smooth up to $x=0$. The assumed invertibility of $\wsym(A)-\lambda$ allows the construction of a parametrix $B_1(\lambda)$ whose complete symbols are locally of the form
$$
x^{m}b(x,y,z,x\xi,x\eta,\zeta;x^m\lambda),
$$
where $b(x,y,z,\xi,\eta,\zeta;\lambda)$ is a classical symbol of order $-m$. This means that each $b$ is a smooth function satisfying the estimates
$$
\Bigl|\partial_{(x,y,z)}^{\alpha}\partial_{(\xi,\eta,\zeta)}^{\beta}
\partial_{\lambda}^{\gamma_1}\partial_{\overline{\lambda}}^{\gamma_2} b\Bigr| = 
O\Bigl((1 + |(\xi,\eta,\zeta)| + |\lambda|^{1/m})^{-m-|\beta|-m|\gamma|}\Bigr)
$$
as $(1 + |(\xi,\eta,\zeta)| + |\lambda|^{1/m}) \to \infty$ for every $\alpha,\beta\in\N_0^{d}$ and $\gamma\in\N_0^2$, locally uniformly in $(x,y,z)$ up to the boundary. Moreover, $b$ admits an asymptotic expansion
\begin{equation*}
b(x,y,z,\xi,\eta,\zeta;\lambda) \sim \sum_{j=0}^{\infty}b_{j}(x,y,z,\xi,\eta,\zeta;\lambda)
\end{equation*}
with components $b_{j}$ that are anisotropic homogeneous of degree $-m-j$, i.e.,
\begin{equation*}
b_{j}(x,y,z,\varrho\xi,\varrho\eta,\varrho\zeta;\varrho^m\lambda) = 
\varrho^{-m-j}b_{j}(x,y,z,\xi,\eta,\zeta;\lambda) \text{ for } \varrho > 0.
\end{equation*}

In general, $B_1(\lambda) : C_c^{\infty}(\open\M;E) \to C^{\infty}(\open\M;E)$ does not extend by continuity to an operator on $x^{-\gamma}L^2_b$. However,  it can be modified by a smoothing operator (using Mellin quantization) in order to get a parametrix, also denoted by $B_1(\lambda)$, such that
\begin{equation*} 
B_1(\lambda): x^{-\gamma}L^2_b(\M;E) \to x^{-\gamma+m}H^m_{e}(\M;E)
\end{equation*}
is bounded. The resulting operator $B_1(\lambda)$ is obtained, near the boundary,  by glueing together operators of the form $\Op_y(q)(\lambda)$ with an operator-valued symbol $q(y,\eta;\lambda): C_c^{\infty}(\Z^{\wedge};E^\wedge) \to C^{\infty}(\Z^{\wedge};E^\wedge)$ of the form
\begin{equation}\label{Completeedgesymbols}
q(y,\eta;\lambda) = \opm(h)(y,\eta;\lambda) + v(y,\eta;\lambda) + g(y,\eta;\lambda)
\end{equation}
with components $\opm(h)$, $v$, and $g$, defined as follows. 

First, for $u \in C_c^{\infty}(\Z^{\wedge};E^\wedge)$,
\begin{equation}\label{HolomMellinOp}
\opm(h)(y,\eta;\lambda)u = \frac{1}{2\pi} \int\limits_{\Im\sigma=\gamma}\!\int_{\overline{\R}_+}
\Bigl(\frac{x}{x'}\Bigr)^{i\sigma}x^{m}h(x,y,\sigma,x\eta;x^m\lambda)u(x') \frac{dx'}{x'}\,d\sigma,
\end{equation}
where $h(x,y,\sigma,\eta;\lambda)$ is a so-called Mellin symbol; it is smooth in $y$ and $x$ up to $x = 0$, it is holomorphic in $\sigma \in \C$, and it takes values in the pseudodifferential operators of order $-m$ on the fiber $\Z$ that depend isotropically on the parameters $(\Re\sigma,\eta)$ and anisotropically on the parameter $\lambda$. 

Let $|\eta,\lambda|_m$ denote the anisotropic distance function  
\begin{equation}\label{rhosubm}
|\eta,\lambda|_m = \bigl(|\eta|^{2m} + |\lambda|^2\bigr)^{\frac{1}{2m}},
\end{equation}
and let $\varrho_m:\R^q\times\Lambda\to\R_+$ be a positive smooth function such that
\begin{equation*}
\varrho_m(\eta,\lambda) = |\eta,\lambda|_m \text{ for }  |\eta,\lambda|_m>1.
\end{equation*}
For the second component of $q(y,\eta;\lambda)$ in \eqref{Completeedgesymbols}, we have 
\begin{equation}\label{SmoothingMellin}
v(y,\eta;\lambda)u=\frac{1}{2\pi}\int\limits_{\Im\sigma=\gamma}\!\int_{\overline{\R}_+}
\Bigl(\frac{x}{x'}\Bigr)^{i\sigma}x^{m} h_v(x,x',y,\sigma,\eta;\lambda)u(x')\frac{dx'}{x'}\,d\sigma
\end{equation}
with $h_v(x,x',y,\sigma,\eta;\lambda)=\omega\bigl(x\varrho_m(\eta,\lambda)\bigr)\tilde{h}(y,\sigma)\omega\bigl(x'\varrho_m(\eta,\lambda)\bigr)$, where $\omega \in C^{\infty}(\overline{\R}_+)$ is some cut-off function supported near zero, and $\tilde{h}(y,\sigma)$ is a family of smoothing operators on $\Z$, depending smoothly on $y$, holomorphic in $\sigma$ in a small strip centered at $\Im\sigma=\gamma$, and rapidly decreasing in the parameter $\Re\sigma$ as $|\Re\sigma|\to \infty$. 

In order to describe the last component of $q(y,\eta;\lambda)$, we need to introduce the following weighted spaces: For $s,t,\delta\in\R$, let $\K_\delta^{s,t}(\Z^\wedge;E^\wedge)$ be the space consisting of all $u$ such that $\omega u \in x^{t}H^s_b(\Z^{\wedge};E^\wedge)$ and $(1-\omega)u\in x^{\frac{n+1}{2}-\gamma-\delta} H^s_{\textup{cone}}(\Z^{\wedge};E^\wedge)$. Observe that under the conditions of Proposition~\ref{WedgeMinProperties}, we have $\Dom_{\wedge,\min}(y)=\K_0^{m,-\gamma+m}$.

Now, $g(y,\eta;\lambda)$ in \eqref{Completeedgesymbols} is a smooth family of operators in
$\L\bigl(\K_{\delta}^{s,-\gamma},\K_{\delta'}^{s',-\gamma+m+\varepsilon}\bigr)$ for any $s,s',\delta,\delta' \in \R$ and some sufficiently small $\varepsilon > 0$, which for all multi-indices $\alpha$, $\beta$, $\gamma$, satisfies the norm estimates 
\begin{equation}\label{GreenSymbolEstimate}
\Bigl\| \kappa^{-1}_{\varrho_m(\eta,\lambda)} \bigl( \partial_y^{\alpha}\partial_{\eta}^{\beta} 
\partial_{\lambda}^{\gamma_1} \partial_{\overline{\lambda}}^{\gamma_2} 
g(y,\eta;\lambda)\bigr) \kappa_{\varrho_m(\eta,\lambda)}\Bigr\| 
= O\bigl(\varrho_m(\eta,\lambda)^{\mu-|\beta|-m|\gamma|}\bigr)
\end{equation}
as $\varrho_m(\eta,\lambda) \to \infty$, with $\mu=-m$, locally uniformly for $y \in \Omega$. Here $\kappa_{\varrho}$ is given by $\kappa_{\varrho}u(x,z) = \varrho^{\gamma}u(\varrho x,z)$ for $\varrho > 0$, see \eqref{kappadef}.  Moreover, there is an asymptotic expansion
\begin{equation}\label{GreenAsymptotic}
g(y,\eta;\lambda) \sim \sum\limits_{j=0}^{\infty}g_{j}(y,\eta;\lambda)
\end{equation}
with the $g_{j}(y,\eta;\lambda)$ satisfying the $\kappa$-homogeneity relations
\begin{equation}\label{Greenkappahomogeneous}
g_{j}(y,\varrho\eta;\varrho^m\lambda) = \varrho^{\mu-j}\kappa_{\varrho}\,
g_{j}(y,\eta;\lambda)\kappa_{\varrho}^{-1} \textup{ for } \varrho > 0.
\end{equation}
The formal adjoint of $g(y,\eta;\lambda)$ with respect to the $x^{-\gamma}L^2_b$-inner product on $\Z^{\wedge}$ has similar properties.

The quantized family $\Op_y(v+g)(\lambda)$ with $v$ and $g$ as in \eqref{Completeedgesymbols} give rise to smoothing operators on $\open\M$ whose Schwartz kernels may exhibit blow-up at the boundary. The final parametrix $B(\lambda)$ will be obtained via suitable modifications of $B_1(\lambda)$ by smoothing operators of that kind, modulo global negligible remainders. By negligible we mean a family that takes values in $\ell^1\bigl(x^{-\gamma}L^2_b, x^{-\gamma+m}H^m_{e}\bigr)$ and is rapidly decreasing in $\lambda \in \Lambda$ as $|\lambda| \to \infty$.

\begin{remark}\label{DerivativeRemark}
Let $\alpha = (\alpha_1,\alpha_2) \in \N_0^2$, $|\alpha| > 0$, and let $\varphi \in C^{\infty}(\M;\Hom(E))$. For any parametrix $B_1(\lambda)$ with local structure as described above, we have that modulo negligible operators, the family $\varphi\partial_{\lambda}^{\alpha_1} \partial_{\overline{\lambda}}^{\alpha_2}B_1(\lambda)$ can be realized (near the boundary) as $\Op_y(q_\alpha)(\lambda)$ for some $q_\alpha(y,\eta;\lambda)$ as in \eqref{Completeedgesymbols} with $v(y,\eta;\lambda)=0$ and $g(y,\eta;\lambda)$ such that \eqref{GreenSymbolEstimate} and \eqref{Greenkappahomogeneous} are satisfied with $\mu=-m-m|\alpha|$. In this case, the corresponding Mellin symbol $h$ is of order $-m-m|\alpha|$ and $\opm(h)(y,\eta;\lambda)$ has an extra $x^{m|\alpha|}$ inside the integral in \eqref{HolomMellinOp}.
\end{remark}

We continue with the construction of the parametrix $B(\lambda)$ and now proceed to refine $B_1(\lambda)$ making use of our assumption on the boundary spectrum of $A$. Notice that up to this point we only made use of the $w$-ellipticity with parameter. As described above, near the boundary we have $B_1(\lambda) = \Op_y(q)(\lambda)$ modulo negligible terms. With $h$ as in \eqref{HolomMellinOp} and $h_v$ as in \eqref{SmoothingMellin}, the conormal symbol of $B_1(\lambda)$ can be represented as
\begin{equation*}
\widehat{B}_1(y,\sigma) = h(0,y,\sigma,0,0) + h_v(0,0,y,\sigma,0,0).
\end{equation*}
For every $y \in \Y$, $\widehat{B}_1(y,\sigma) : C^{\infty}(\Z_y;E) \to C^{\infty}(\Z_y;E)$ is a holomorphic family of pseudodifferential operators defined for $\sigma$ in a small strip centered at $\Im\sigma = \gamma$. Moreover, by construction we have that $\widehat{B}_1(y,\sigma+im)$ is a parameter-dependent parametrix of $\widehat{A}(y,\sigma)$ for $\Im\sigma = \gamma - m$, modulo remainder terms that are smoothing on the fibers $\Z_y$ and rapidly decreasing as $|\Re\sigma| \to \infty$. Because of our standing assumption on $\spec_e(A)$, we have that $\widehat{A}(y,\sigma)$ is invertible for $\Im\sigma = \gamma-m$.  Thus we can locally modify $B_1(\lambda)$ by terms of the form $\Op_y(v)(\lambda)$ with $v(y,\eta;\lambda)$ as in \eqref{Completeedgesymbols} in such a way that $\widehat{B}_1(y,\sigma+im)$ is an inverse of $\widehat{A}(y,\sigma)$ for all $\Im\sigma = \gamma-m$. Denote this modification by $B_2(\lambda)$. We have now used the first two assumptions at the beginning of this section.

Modulo negligible terms, we locally have $B_2(\lambda) = \Op_y(q_2)(\lambda)$ with $q_2(y,\eta;\lambda)$ as in \eqref{Completeedgesymbols}. Thus, for $(\eta,\lambda) \neq (0,0)$, the normal family $B_{2,\wedge}(y,\eta;\lambda)$ of $B_2(\lambda)$ can be locally represented as
\begin{equation}\label{B2wedge}
B_{2,\wedge}(y,\eta;\lambda) = \opm(h_0)(y,\eta;\lambda) + v_{0}(y,\eta;\lambda) + g_{0}(y,\eta;\lambda),
\end{equation}
where $\opm(h_0)(y,\eta;\lambda)$ is of the same form as \eqref{HolomMellinOp} with $h(x,y,\sigma,x\eta;x^m\lambda)$ replaced by $h(0,y,\sigma,x\eta;x^m\lambda)$, $v_{0}(y,\eta;\lambda)$ is of the form \eqref{SmoothingMellin} with the function $\varrho_m(\eta,\lambda)$ in $h_v$ replaced by $|\eta,\lambda|_m$ from \eqref{rhosubm}, and where $g_{0}(y,\eta;\lambda)$ is the leading term in the asymptotic expansion \eqref{GreenAsymptotic} of $g(y,\eta;\lambda)$. Observe that we have the $\kappa$-homogeneity
\begin{equation*}
B_{2,\wedge}(y,\varrho\eta;\varrho^m\lambda) = \varrho^{-m}\kappa_{\varrho}
B_{2,\wedge}(y,\eta;\lambda)\kappa_{\varrho}^{-1} \text{ for } \varrho > 0.
\end{equation*}
Moreover, on $(T^*\Y\times\Lambda)\setminus 0$, $B_{2,\wedge}(y,\eta;\lambda)$ maps $x^{-\gamma}L^2_b(\Z_y^{\wedge};E^{\wedge})$ to $\Dom_{\wedge,\min}(y)$ and it inverts the normal family
\begin{equation}\label{Awedgeminuslambda}
A_{\wedge}(y,\eta) - \lambda: \Dom_{\wedge,\min}(y)\to x^{-\gamma}L^2_b(\Z_y^{\wedge};E^{\wedge})
\end{equation}
modulo compact operators. 

As a last step, we now make use of the assumed injectivity of \eqref{Awedgeminuslambda} to obtain our final refinement of the parametrix. Let $S(T^*\Y\times \C)$ be the sphere bundle of $T^*\Y\times \C\to \Y$ with respect to some metric and let $S_\Lambda=S(T^*\Y\times \C)\cap (T^*\Y\times \Lambda)$. The injectivity of the normal family implies that there is a smooth vector bundle $J \to S_\Lambda$ and a smooth family
\begin{equation*}
k_{\wedge}(y,\eta;\lambda) : J_{(y,\eta,\lambda)} \to C_c^{\infty}(\Z_y^{\wedge};E^{\wedge})
\end{equation*}
such that
\begin{equation}\label{AwedgeK}
\begin{pmatrix} A_{\wedge}(y,\eta) - \lambda & k_{\wedge}(y,\eta;\lambda) \end{pmatrix} :
\begin{array}{c}
\Dom_{\wedge,\min}(y) \\ \oplus \\ J_{(y,\eta,\lambda)}
\end{array}
\to x^{-\gamma}L^2_b
\end{equation}
is invertible on $S_\Lambda$. We extend $k_{\wedge}(y,\eta;\lambda)$ by $\kappa$-homogeneity to all of $\bigl(T^*\Y\times\Lambda\bigr)\setminus 0$ such that
\begin{equation*}
k_{\wedge}(y,\varrho\eta;\varrho^m\lambda) = \varrho^{m}\kappa_{\varrho} k_{\wedge}(y,\eta;\lambda) \textup{ for } \varrho > 0.
\end{equation*}
The inverse of \eqref{AwedgeK} is then of the form
\begin{equation*}
\begin{pmatrix} A_{\wedge}(y,\eta) - \lambda & k_{\wedge}(y,\eta;\lambda)\end{pmatrix}^{-1} =
\begin{pmatrix}
B_{2,\wedge}(y,\eta;\lambda) + g_{2,\wedge}(y,\eta;\lambda) \\ t_{\wedge}(y,\eta;\lambda)
\end{pmatrix},
\end{equation*}
where $g_{2,\wedge}(y,\eta;\lambda)$ satisfies the $\kappa$-homogeneity \eqref{Greenkappahomogeneous} with $\mu = -m$ and $j = 0$. 

Let $\chi \in C^{\infty}(T^*\Y\times\Lambda)$ be such that $\chi=0$ in a neighborhood of the zero section and $\chi=1$ near infinity. The function $\chi g_{2,\wedge}$ is then a smooth family of operators in $\L\bigl(\K^{s,-\gamma}_{\delta}, \K^{s',-\gamma+m+\varepsilon}_{\delta'}\bigr)$ for every $s,s',\delta,\delta' \in \R$ and some $\varepsilon > 0$, satisfying the estimates \eqref{GreenSymbolEstimate} with $\mu = -m$.

On each local patch near the boundary, we can build $\Op_y(\chi g_{2,\wedge})(\lambda)$ and add it to $B_2(\lambda)=\Op_y(q_2)(\lambda)$ to define a new global parameter-dependent parametrix
\begin{equation}\label{B3Parametrix}
B_3(\lambda) : x^{-\gamma}L^2_b(\M;E) \to x^{-\gamma+m}H^m_{e}(\M;E)
\end{equation}
such that $B_{3,\wedge}(y,\eta;\lambda)=B_{2,\wedge}(y,\eta;\lambda) + g_{2,\wedge}(y,\eta;\lambda)$, and
\begin{equation*}
B_3(\lambda)(A-\lambda)= I+G(\lambda): x^{-\gamma+m}H^m_{e}(\M;E)\to x^{-\gamma+m}H^m_{e}(\M;E)
\end{equation*}
with a remainder $G(\lambda)$ that, modulo terms in $\S\bigl(\Lambda,\ell^1\bigl(x^{-\gamma+m}H^m_{e}(\M;E)\bigr)\bigr)$, can locally be realized as $\Op_y(g)(\lambda)$ with $g(y,\eta;\lambda)$ in $\L\bigl(\K^{s,-\gamma+m}_{\delta},\K^{s',-\gamma+m+\varepsilon}_{\delta'}\bigr)$ satisfying the symbol estimates \eqref{GreenSymbolEstimate} with $\mu = -1$, and having an asymptotic expansion of the form
\eqref{GreenAsymptotic}.

By means of a formal Neumann series argument, one can find a family $\tilde{G}(\lambda)$ having the same structure as $G(\lambda)$ such that 
\begin{equation*}
(I+\tilde{G}(\lambda))(I+G(\lambda)) = I + R(\lambda)
\end{equation*}
with $R(\lambda) \in \S\bigl(\Lambda,\ell^1\bigl(x^{-\gamma+m}H^m_{e}(\M;E)\bigr)\bigr)$. In fact, $\tilde{G}(\lambda)$ can be chosen in such a way that $R(\lambda)$ vanishes for $\lambda \in \Lambda$ with $|\lambda|$ large enough. We then set
\begin{equation*}
B(\lambda) = (I + \tilde{G}(\lambda))B_3(\lambda)
\end{equation*}
and arrive at the following theorem.

\begin{theorem}\label{ParametrixTheorem}
The parametrix $B(\lambda) : x^{-\gamma}L^2_b \to x^{-\gamma+m}H^m_{e}$ constructed above is a left inverse of $A - \lambda : x^{-\gamma+m}H^m_{e} \to x^{-\gamma}L^2_b$ modulo compact operators. For $\lambda \in \Lambda$ with $|\lambda|$ large enough, $B(\lambda)$ is even an exact left inverse of $A - \lambda$.
\end{theorem}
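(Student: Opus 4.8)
The plan is to read the theorem off the three-step refinement carried out above: the substantive analysis is already contained in the construction, so the proof is essentially bookkeeping. First I would record the relevant outputs of the construction. By \eqref{B3Parametrix} the operator $B_3(\lambda):x^{-\gamma}L^2_b(\M;E)\to x^{-\gamma+m}H^m_{e}(\M;E)$ is bounded and satisfies $B_3(\lambda)(A-\lambda)=I+G(\lambda)$ on $x^{-\gamma+m}H^m_{e}(\M;E)$, where $G(\lambda)$ is, modulo $\S\bigl(\Lambda,\ell^1\bigl(x^{-\gamma+m}H^m_{e}(\M;E)\bigr)\bigr)$, locally of the form $\Op_y(g)(\lambda)$ with $g$ a Green symbol (it gains the weight $\varepsilon>0$ at the edge and maps into every Sobolev order) satisfying the estimates \eqref{GreenSymbolEstimate} with $\mu=-1$; and $\tilde G(\lambda)$ has the same structure as $G(\lambda)$ and is chosen so that $(I+\tilde G(\lambda))(I+G(\lambda))=I+R(\lambda)$ with $R(\lambda)\in\S\bigl(\Lambda,\ell^1\bigl(x^{-\gamma+m}H^m_{e}(\M;E)\bigr)\bigr)$ and $R(\lambda)=0$ for $|\lambda|$ large. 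Since $B(\lambda)=(I+\tilde G(\lambda))B_3(\lambda)$ and $I+\tilde G(\lambda)$ acts on $x^{-\gamma+m}H^m_{e}(\M;E)$, composing $B_3(\lambda)(A-\lambda)=I+G(\lambda)$ on the left by $I+\tilde G(\lambda)$ yields
\begin{equation*}
B(\lambda)(A-\lambda)=(I+\tilde G(\lambda))\,B_3(\lambda)(A-\lambda)=(I+\tilde G(\lambda))(I+G(\lambda))=I+R(\lambda)
\end{equation*}
as operators on $x^{-\gamma+m}H^m_{e}(\M;E)$.

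The two assertions then follow at once. For every $\lambda\in\Lambda$ the remainder $R(\lambda)$ is trace class, hence compact, on $x^{-\gamma+m}H^m_{e}(\M;E)$, which is precisely the statement that $B(\lambda)$ is a left inverse of $A-\lambda$ modulo compact operators. For $|\lambda|$ large $R(\lambda)=0$, so $B(\lambda)(A-\lambda)=I$ and $B(\lambda)$ is an exact left inverse there.

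The steps that genuinely require verification — all of them carried out in the construction above, so I would only flag them in the write-up — are: (a) the composition identity $B_3(\lambda)(A-\lambda)=I+G(\lambda)$ with $G(\lambda)$ a Green remainder, which rests on the composition theorem for operator-valued edge symbols together with the fact that the three symbolic levels have been matched in turn — the wedge symbol with parameter, the conormal symbol on the line $\Im\sigma=\gamma-m$ (via Lemma~\ref{ConormalSymbolFredholm} and the hypothesis on $\spec_{e}(A)$), and the normal family, corrected modulo compacts by $B_{2,\wedge}$ and then exactly by $g_{2,\wedge}$ using the injectivity hypothesis and the construction around \eqref{AwedgeK}; and (b) the closedness of the Green class under composition, with orders adding, and its asymptotic summability, which is what legitimizes the formal Neumann series $\tilde G(\lambda)\sim\sum_{k\ge 1}(-G(\lambda))^{k}$ and, after an excision in $\lambda$, forces $R(\lambda)=0$ for $|\lambda|$ large (here the negativity $\mu=-1$ in \eqref{GreenSymbolEstimate} is what makes $\|\Op_y(g)(\lambda)\|$ small for $|\lambda|$ large, so that the series converges). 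The main obstacle is (a): one must check that each correction stays inside the operator-valued symbol class with the $\kappa$-homogeneity \eqref{Greenkappahomogeneous}, and that a Green operator is compact — indeed trace class — on $x^{-\gamma+m}H^m_{e}(\M;E)$, which ultimately rests on a Rellich-type compact embedding for the weighted spaces $\K^{s,t}_\delta(\Z^\wedge;E^\wedge)$, i.e.\ a simultaneous gain of smoothness and of edge weight.
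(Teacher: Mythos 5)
Your argument is correct and coincides with the paper's: the theorem is stated as the summary of the three-step construction preceding it, so the proof is exactly the bookkeeping you carry out — compose $B_3(\lambda)(A-\lambda)=I+G(\lambda)$ with $I+\tilde G(\lambda)$ to get $B(\lambda)(A-\lambda)=I+R(\lambda)$ with $R(\lambda)\in\S\bigl(\Lambda,\ell^1(x^{-\gamma+m}H^m_e)\bigr)$ vanishing for $|\lambda|$ large, then observe that trace class implies compact. You correctly identify where the real work lives (the composition and asymptotic summability of Green symbols, and the Rellich-type embeddings for $\K^{s,t}_\delta$ that make Green remainders compact/trace class), and correctly explain why the Neumann correction kills $R(\lambda)$ exactly for $|\lambda|$ large via the negative order $\mu=-1$ in \eqref{GreenSymbolEstimate}.
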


Observe that $B(\lambda)$ has near the boundary the same structure as $B_1(\lambda)$, cf. \eqref{Completeedgesymbols}.  Actually, a careful inspection of the arguments in our parametrix construction above shows that, modulo negligible terms in $\S\bigl(\Lambda,\ell^1\bigl(x^{-\gamma}L^2_b, x^{-\gamma+m}H^m_{e}\bigr)\bigr)$, the difference $B(\lambda)-B_1(\lambda)$ is near the boundary of the form $\Op_y(v+g)(\lambda)$ with $v(y,\eta;\lambda)$ as in \eqref{SmoothingMellin} and $g(y,\eta;\lambda)$ satisfying \eqref{GreenSymbolEstimate} and \eqref{GreenAsymptotic} with $\mu=-m$.

Moreover, if in addition to our standing assumptions we also require that 
\begin{equation*}
A_{\wedge}(y,\eta) - \lambda : \Dom_{\wedge,\min}(y) \to x^{-\gamma}L^2_b
\end{equation*}
be surjective (and therefore bijective) on $\bigl(T^*\Y\times\Lambda\bigr)\setminus 0$, then the refinements from $B_2(\lambda)$ to $B_3(\lambda)$ to $B(\lambda)$ in the parametrix construction above furnish a parametrix $B(\lambda)$ that is also a right inverse of $A-\lambda : x^{-\gamma+m}H^m_{e} \to x^{-\gamma}L^2_b$ for $|\lambda|$ large enough. Thus, in this case, $A-\lambda$ is invertible for $\lambda \in \Lambda$ with $|\lambda|$ large, and $(A-\lambda)^{-1} =B(\lambda)$. 

\begin{remark}\label{AParametrix}
If $A \in x^{-m}\Diff^m_e(\M;E,F)$ satisfies the hypotheses of Theorem~\ref{DminTheorem}, then the above parametrix construction up to \eqref{B3Parametrix} gives (by setting $\lambda=0$) a parametrix $B$ of 
\begin{equation*}
A: x^{-\gamma+m}H^m_{e}(\M;E) \to x^{-\gamma}L^2_b(\M;F)
\end{equation*}
such that $BA-1$ is a compact operator in $\L\bigl(x^{-\gamma+m}H^m_{e}(\M;E)\bigr)$.
\end{remark} 

\section{Proof of Theorem~\ref{MinLeftInverseandExpansion}}\label{ExpansionProof}

The parametrix $B(\lambda): x^{-\gamma}L^2_b(\M;E) \to x^{-\gamma+m}H^m_{e}(\M;E)$ constructed in the previous section gives a left inverse of $A_{\min}-\lambda$; see Theorem~\ref{ParametrixTheorem}. Note that, in the situation at hand, $\Dom_{\min}(A) = x^{-\gamma+m}H^m_{e}(\M;E)$ by Theorem~\ref{DminTheorem}.

Let $\alpha=(\alpha_1,\alpha_2) \in \N_0^2$ be such that $1+|\alpha| > \frac{d}{m}$, let $\varphi \in C^{\infty}(\M;\Hom(E))$, and consider the operator family
\begin{equation*}
P(\lambda) = \varphi \partial_{\lambda}^{\alpha_1}\partial_{\overline{\lambda}}^{\alpha_2}B(\lambda): x^{-\gamma}L^2_b \to x^{-\gamma}L^2_b.
\end{equation*}
We need to prove that $P(\lambda)$ is of trace class and that $t(\lambda)=\Tr P(\lambda)$ admits the asymptotic expansion \eqref{tlambdaexpansion} as $|\lambda|\to\infty$.  Observe that for both discussions, terms in $\S\bigl(\Lambda,\ell^1(x^{-\gamma}L^2_b)\bigr)$ are negligible. Modulo such a term, $P(\lambda)$ can be written as
\begin{equation*}
P(\lambda)=P_{\textup{int}}(\lambda)+P_{\partial}(\lambda)
\end{equation*}
with $P_{\textup{int}}(\lambda)$ supported in the interior of $\M$ and $P_{\partial}(\lambda)$ supported near $\partial\M$. 

The interior term is a parameter-dependent family of pseudodifferential operators of order $-m-m|\alpha|$ on the double of $\M$ (which is a smooth compact manifold without boundary). Since $m+m|\alpha|>d=\dim\M$, we conclude that $P_{\textup{int}}(\lambda)$ is of trace class. Moreover, we have the standard asymptotic expansion
\begin{equation}\label{Pintexpansion}
\Tr P_{\textup{int}}(\lambda) \sim \sum_{j=0}^{\infty}p_j(\lambda) \text{ as } |\lambda| \to \infty,
\end{equation}
where $p_j(\varrho^m\lambda) = \varrho^{d-j-m-m|\alpha|}p_j(\lambda)$ for $\varrho > 0$.

In a coordinate patch $\Omega \times \Z^{\wedge}$ near the boundary, with coordinates $y\in\Omega$ for $\Y$, we have $P_{\partial}(\lambda) = \Op_y(q)(\lambda)$ modulo negligible terms for some operator-valued symbol $q(y,\eta;\lambda)$ taking values in the trace class operators on $\Z^\wedge$; see Remark~\ref{DerivativeRemark}. Moreover, as a symbol with values in $\ell^1\bigl(x^{-\gamma}L^2_b\bigr)$, the order of $q(y,\eta;\lambda)$ is less than $-\dim\Y$, so $P_{\partial}(\lambda)$ is of trace class with
\begin{equation}\label{TraceDarstellung}
\Tr P_{\partial}(\lambda) = \frac{1}{(2\pi)^q}\iint \tr_{\Z^{\wedge}}\bigl(q(y,\eta;\lambda)\bigr)\,dy\,d \eta,
\end{equation}
where $\tr_{\Z^{\wedge}}$ is the trace functional on $\ell^1\bigl(x^{-\gamma}L^2_b\bigr)$ on $\Z^{\wedge}$.

Now, for every $y\in\Omega$, the symbol $q(y,\eta;\lambda)$ is a family of cone operators on $\Z^\wedge$ with parameters $(\eta,\lambda)$; isotropic in $\eta\in\R^q$ and anisotropic in $\lambda\in\Lambda$. By adapting the results from \cite[Section 4]{GKM4} to our present situation, we obtain the expansion
\begin{equation}\label{FiberExpansion}
\tr_{\Z^{\wedge}}\bigl(q(y,\eta;\lambda)\bigr) \sim \sum_{j=0}^{\infty}\sum_{k=0}^{m_j} c_{jk}(y,\eta,\lambda)\log^k(|\eta,\lambda|_m) \text{ as } |\eta,\lambda|_m \to \infty,
\end{equation}
where the $c_{jk}$ satisfy the homogeneity relations
$$
c_{jk}(y,\varrho\eta,\varrho^m\lambda) = \varrho^{n+1-j-m-m|\alpha|}c_{jk}(y,\eta,\lambda) \text{ for } \varrho > 0.
$$
Here $m_j \leq 1$ for all $j$, and $m_j = 0$ for $j \leq n=\dim \Z$. 

In other words, $\tr_{\Z^{\wedge}}\bigl(q(y,\eta;\lambda)\bigr)$ is a parameter-dependent $\log$-polyhomogeneous symbol; see \cite{Le99}. Making use of the asymptotic expansion \eqref{FiberExpansion}, the integral \eqref{TraceDarstellung}, and the typical arguments involving change of variables, homogeneity, and the multiplicative properties of the logarithm, we then get the expansion
\begin{equation}\label{Pdexpansion}
\Tr P_{\partial}(\lambda) \sim
\sum_{j=0}^{\infty}\sum_{k=0}^{m_j}d_{jk}(\lambda)\log^k(|\lambda|) \text{ as } |\lambda| \to \infty,
\end{equation}
where $d_{jk}(\varrho^m\lambda) = \varrho^{d-j-m-m|\alpha|}d_{jk}(\lambda)$ for $\varrho > 0$. As before, $m_j \leq 1$ for all $j$, and $m_j = 0$ for $j \leq n$.

Both expansions \eqref{Pintexpansion} and \eqref{Pdexpansion} imply the asymptotic expansion \eqref{tlambdaexpansion}, and the proof of Theorem~\ref{MinLeftInverseandExpansion} is complete. \qed


\end{document}